\documentclass[11pt]{amsart}
\usepackage[margin=1.35in]{geometry}
\usepackage{amscd,amsmath,amsxtra,amsthm,amssymb,stmaryrd,xr,mathrsfs,mathtools,enumerate,commath, comment}
\usepackage{stmaryrd}
\usepackage{xcolor}
\usepackage{commath}
\usepackage{comment}
\usepackage{tikz-cd}
\usepackage{longtable} 
\usepackage{pdflscape} 
\usepackage{booktabs}
\usepackage{hyperref}
\definecolor{vegasgold}{rgb}{0.77, 0.7, 0.35}
\definecolor{darkgoldenrod}{rgb}{0.72, 0.53, 0.04}
\definecolor{gold(metallic)}{rgb}{0.83, 0.69, 0.22}
\hypersetup{
 colorlinks=true,
 linkcolor=darkgoldenrod,
 filecolor=brown,      
 urlcolor=gold(metallic),
 citecolor=darkgoldenrod,
 }
\newtheorem{lthm}{Theorem}

\usepackage[all,cmtip]{xy}

\DeclareFontFamily{U}{wncy}{}
\DeclareFontShape{U}{wncy}{m}{n}{<->wncyr10}{}
\DeclareSymbolFont{mcy}{U}{wncy}{m}{n}
\DeclareMathSymbol{\Sh}{\mathord}{mcy}{"58}
\usepackage[T2A,T1]{fontenc}
\usepackage[OT2,T1]{fontenc}

\newtheorem{theorem}{Theorem}[section]
\newtheorem{lemma}[theorem]{Lemma}
\newtheorem{ass}[theorem]{Assumption}
\newtheorem*{theorem*}{Theorem}
\newtheorem*{ass*}{Assumption}
\newtheorem{definition}[theorem]{Definition}
\newtheorem{corollary}[theorem]{Corollary}
\newtheorem{remark}[theorem]{Remark}

\newtheorem{conjecture}[theorem]{Conjecture}
\newtheorem{proposition}[theorem]{Proposition}
\newtheorem{question}[theorem]{Question}

\newcommand{\cF}{\mathcal{F}}
\newcommand{\cP}{\mathcal{P}}

\newcommand{\bT}{\mathbf{T}}

\newcommand{\Z}{\mathbb{Z}}

\newcommand{\Q}{\mathbb{Q}}

\newcommand{\cO}{\mathcal{O}}

\newcommand{\op}[1]{\operatorname{#1}}

\numberwithin{equation}{section}

\begin{document}

\title[Counting number fields whose Galois group is a wreath product]{Counting number fields whose Galois group is a wreath product of symmetric groups}

\author[H.~Mishra]{Hrishabh Mishra}
\address[H.~Mishra]{Chennai Mathematical Institute, H1, SIPCOT IT Park, Kelambakkam, Siruseri, Tamil Nadu 603103, India}
\email{hrishabh@cmi.ac.in}

\author[A.~Ray]{Anwesh Ray}
\address[Ray]{Centre de recherches mathématiques,
Université de Montréal,
Pavillon André-Aisenstadt,
2920 Chemin de la tour,
Montréal (Québec) H3T 1J4, Canada}
\email{anwesh.ray@umontreal.ca}

\keywords{arithmetic statistics, Malle's conjecture, counting number fields, arboreal Galois representations, arithmetic dynamics}
\subjclass[2020]{11R45, 11R29 (Primary), 37P15 (Secondary)}

\maketitle

\begin{abstract}
  Let $K$ be a number field and $k\geq 2$ be an integer. Let $(n_1,n_2, \dots, n_k)$ be a vector with entries $n_i\in \Z_{\geq 2}$. Given a number field extension $L/K$, we denote by $\widetilde{L}$ the Galois closure of $L$ over $K$. We prove asymptotic lower bounds for the number of number field extensions $L/K$ with $[L:K]=\prod_{i=1}^k n_i$, such that $\op{Gal}(\widetilde{L}/K)$ is isomorphic to the iterated wreath product of symmetric groups $S_{n_1}\wr S_{n_2}\wr \dots \wr S_{n_k}$. Here, the number fields $L$ are ordered according to discriminant $|\Delta_L|:=|\op{Norm}_{K/\Q} (\mathcal{D}_{L/K})|$. The results in this paper are motivated by Malle's conjecture. When $n_1=n_2=\dots =n_k$, these wreath products arise naturally in the study of \emph{arboreal Galois representations} associated to rational functions over $K$. We prove our results by developing Galois theoretic techniques that have their origins in the study of dynamical systems.
\end{abstract}

\section{Introduction}
\subsection{Background and motivation}
\par In this paper we prove asymptotic lower bounds for the number of
number field extensions with prescribed Galois group. The Galois groups considered here are wreath
products of symmetric groups and are natural to consider since they arise as
splitting fields of iterates of polynomials defined over number fields. In the
study of dynamical systems, such Galois extensions naturally arise, and it is convenient to leverage techniques from arithmetic dynamics.
\par Let $K$ be an algebraic number field with $d=[K:\Q]$. Given a number field extension $L/K$, set $\widetilde{L}$ to denote the Galois closure or $L$ over $K$. Let $\mathcal{D}_{L/K}$ denote the relative discriminant, and let $N_{K/\Q}:K\rightarrow \Q$ denote the norm map. We set $\Delta_L:=N_{K/\Q}(\mathcal{D}_{L/K})$. Let $G$ be a finite transitive subgroup of $S_n$ and $L/K$ be an extension with $[L:K]=n$. Then, $\op{Gal}(\widetilde{L}/K)$ acts on the set of embeddings of $L$ into $\bar{K}$. For $X\in \mathbb{R}_{>0}$, consider the function 
\[N_{n,K}(X;G):= \#\{L/K\mid L\subset \bar{K}, [L:K]=n,
\op{Gal}(\widetilde{L}/K)\simeq G, |\Delta_L|\leq X\}.\]
Here, the isomorphism $\op{Gal}(\widetilde{L}/K)\simeq G$ is that of permutation subgroups of $S_n$. Malle \cite{malle2002distribution} made a precise conjecture regarding the asymptotic growth of $N_{n, K}(X;G)$ as $X$ goes to infinity. In greater detail, the conjecture predicts that \[N_{n,K}(X;G)\sim c(K, G)X^{a(G)}(\log X)^{b(K,G)-1},\]where $a(G)$ is a constant that only depends on $G$ and its permutation representation, and $b(K, G), c(K,G)$ are constants that depend both on $K$ as well as on $G$. This precise asymptotic prediction is known as the strong form of Malle's conjecture and has been proven for various groups $G\subseteq S_n$. For instance, the conjecture has been proven for the abelian groups by Maki \cite{maki1985density} and Wright \cite{wright1989distribution}, $S_n$ for $n\leq 5$ \cite{davenport1971density, bhargava2005density,bhargava2008density, bhargava2010density}, the dihedral group $D_4$ \cite{cohen2002enumerating}, as well as finite nilpotent groups satisfying additional conditions \cite{koymans2023malle}. For a more detailed and exhaustive list of results, we refer to the discussion in section \ref{s 3.1}. The precise asymptotic predicted by the strong form of Malle's conjecture has been shown to not hold. Recently, Kl\"uners provided an explicit counterexample to the conjecture, cf. \cite{kluners2005counter}. There is however, a weak form of the conjecture, which is still widely expected to hold for all permutation groups $G$. Let us briefly state this conjecture. For a permutation $g\in S_n$, we define its \emph{index} as follows
\[\op{ind}(g):=n-\text{ the number of orbits of }g\text{ on }[n].\] Given a conjugacy class $C$ of $G$, let $\op{ind}(C)$ denote $\op{ind}(g)$, where $g\in C$. For any group $G\neq 1$, set $G^\#:=G\backslash\{1\}$, and set
\[a(G):=\left(\op{min}\{\op{ind}(g)\mid g\in G^\#\}\right)^{-1}.\] 
\begin{conjecture}[Malle's conjecture -- weak form]
    Let $G$ be a transitive permutation group and $K$ be a number field. Then, for all $\epsilon>0$, there exist constants $c_1(K,G), c_2(K,G; \epsilon)>0$ such that 
    \[c_1(K,G)X^{a(G)}\leq N_{n,K}(X;G)<c_2(K,G;\epsilon)X^{a(G)+\epsilon},\]
    for all large enough values of $X$.
\end{conjecture}
Motivated by such developments, asymptotic lower bounds for $N_{n, K}(X;G)$ for various families of groups $G\subset S_n$ have been proven by various authors, and are listed below.
\begin{enumerate}
    \item When the inverse Galois problem is solved for $G$ over $K$, $n=|G|$, and $\iota: G\hookrightarrow S_n$ is the regular representation, a general asymptotic lower bound for $N_{n,K}(X;G)$ is proven by Kl\"uners and Malle, cf. \cite[Theorem 4.1]{klunersmalle}.
    \item For the identity $\iota:S_n\rightarrow S_n$, Malle's conjecture predicts that $N_n(S_n;G)\sim c_n X$, for some constant $c_n>0$ which depends only on $n$. Malle \cite{malle2002distribution} showed that $N_{n, \Q}(X, S_n)\gg X^{1/n}$.
    \item Ellenberg and Venkatesh \cite{ellenberg2006number} showed that $N_{n, K}(X;S_n)\gg X^{\frac{1}{2}+\frac{1}{n^2}}$. 
    \item Bhargava, Shankar and Wang \cite{bhargava2022squarefree} showed that $N_{n, \Q}(X;S_n)\gg X^{\frac{1}{2}+\frac{1}{n}}$.
    \item For the natural inclusion $\iota: A_n\hookrightarrow S_n$, Pierce, Turnage-Butterbaugh and Wood \cite{pierce2020effective} showed that $N_{n, \Q}(X;A_n)\gg X^{\frac{n!-2}{n!(4n-2)}}$. For $n\geq 6$ and $n\neq 7$, this result is further improved upon by Landesman, Lemke-Oliver and Thorne \cite{landesman2021improved}, who have proven that 
    \[N_{n, K}(X;A_n)\gg \begin{cases}
        X^{\frac{(n-4)(n^2-4)}{8(n^3-n^2)}} & \text{ if }n\text{ is even,}\\
        X^{\frac{(n-7)(n+2)}{8n^2}} & \text{ if }n\text{ is odd.}\\
    \end{cases}\]\end{enumerate}
\subsection{Main result} We state our main result. Let $G_1\subseteq S_m$ and $G_2\subseteq S_n$ be two transitive permutation subgroups. The \emph{wreath product} $G=G_1\wr G_2$ is the semidirect product $G_1^n\rtimes G_2$, where $G_2$ permutes the $n$ copies of $G_1$. The group $G$ is seen to be a permutation subgroup of $S_{mn}$. For $\vec{n}=(n_1, \dots, n_k)\in \Z_{\geq 2}^k$, set 
    \[S(\vec{n}):=S_{n_1}\wr S_{n_2}\wr \dots \wr S_{n_k}.\] We remark that taking wreath products is an associative operation. Note that $S(\vec{n})$ is a subgroup of $S_N$, where $N:=\prod_{i=1}^k n_i$. Let $[S_n]^k$ denote the $k$-fold wreath power $S(\underbrace{n, n, n, \dots, n}_{k\text{-times}})$. These wreath powers are known to arise naturally in the study of arboreal Galois representations. For further details, we refer to section \ref{section on arboreal}. Below is the main result of the article, and is proven by refining the method of Ellenberg and Venkatesh \cite[section 3]{ellenberg2006number}.
    \begin{lthm}\label{main thm}
    Let $k\geq 2$ and $\vec{n}=(n_1, \dots, n_k)\in \Z_{\geq 2}^k$, denote by $S(\vec{n})$ the symmetric wreath product $S_{n_1}\wr S_{n_2}\wr \dots \wr S_{n_k}$. Let $K$ be a number field and $d=[K:\Q]$. We set
    \[\begin{split} & B=B(\vec{n}):=\sum_{j=1}^{k-1} \left(\frac{n_j-1}{2}\right)\left(\prod_{v=1}^j n_v\right)+\left(\frac{n_k+1}{2}\right)\left(\prod_{v=1}^k n_v\right)\text{, and, } \\ & N=N(\vec{n}):=\prod_{v=1}^k n_v.\\ \end{split}\]
    Then, we have that
\[N_{N, K}\left(X;S(\vec{n})\right)\gg \begin{cases}
    X^{\frac{B-N/2}{N^2-N}} & \text{ if }B\geq \frac{N^2}{4}+N;\\
    X^{\frac{(B-N)(N+2)}{N^3-N^2}}& \text{ if }B\leq \frac{N^2}{4}+N.\\
\end{cases}\]
\end{lthm}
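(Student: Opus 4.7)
The plan is to refine the Ellenberg-Venkatesh lower bound for $S_n$-extensions \cite{ellenberg2006number} by iterating it along the natural tower of subfields produced by the wreath-product block system. Any $S(\vec{n})$-extension $L/K$ comes with a canonical tower
$$K = M_k \subset M_{k-1} \subset \cdots \subset M_0 = L$$
in which $[M_{j-1}:M_j] = n_j$ and $\op{Gal}(\widetilde{M_j}/K) \simeq S_{n_{j+1}}\wr\cdots\wr S_{n_k}$. Dually, any tower of this shape is realized by successive roots $\gamma_j$, where $\gamma_k=0$ and $\gamma_{j-1}$ is a root of $f_j(x)-\gamma_j \in M_j[x]$ for a monic polynomial $f_j$ of degree $n_j$, so that candidate fields are parameterized by the tuple $(f_1,\dots,f_k)$. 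This is precisely the parameterization used in the study of arboreal Galois representations in arithmetic dynamics.

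At level $j$, write $f_j(x) = x^{n_j} + \sum_{i=1}^{n_j} a_{j,i} x^{n_j-i}$ with $a_{j,i}\in\cO_{M_j}$, and impose weighted archimedean bounds $|a_{j,i}|_v \le H^{w_{j,i}}$ at every archimedean place $v$ of $M_j$, where $H$ is a height parameter and the weights $(w_{j,i})$ are to be optimized. A Minkowski lattice-point count in $\cO_{M_j}$ then produces at least $\gg H^V$ admissible tuples, where
$$V = d\sum_{j=1}^k \Bigl(\prod_{v>j}n_v\Bigr)\sum_{i=1}^{n_j} w_{j,i}.$$
Iterating the relative-discriminant tower formula $\Delta_{M_{j-1}} = \Delta_{M_j}^{n_j}\cdot \bigl|N_{M_j/\Q}(\mathcal{D}_{M_{j-1}/M_j})\bigr|$ and bounding each $\mathcal{D}_{M_{j-1}/M_j}$ by the polynomial discriminant of $f_j(x)-\gamma_j$ (a homogeneous weight-$n_j(n_j-1)$ form in its coefficients, with $a_{j,i}$ of weight $i$), one obtains $|\Delta_L|\ll H^D$ for an exponent $D$ that is linear in the $w_{j,i}$ and involves the tower multipliers $P_j=\prod_{v\le j}n_v$ at each level. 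Setting $X = H^D$ yields the preliminary inequality $N_{N,K}(X;S(\vec{n}))\gg X^{(V-c)/D}$, with $c$ a small correction accounting for the passage from tuples to fields.

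Genericity of the Galois group is handled in two steps. By Hilbert's irreducibility theorem applied to the parameter space, for Zariski-generic $(f_1,\dots,f_k)$ the Galois closure of $L/K$ is exactly $S(\vec{n})$. A quantitative sieve — in the spirit of those appearing in \cite{ellenberg2006number, bhargava2022squarefree} — then bounds the density of non-generic tuples by $o(H^V)$, allowing them to be discarded without affecting the leading term. The dynamical viewpoint stressed in the introduction is essential here: the generic Galois group of an iterated polynomial composition is precisely the iterated wreath product, mirroring the classical full-monodromy results for arboreal Galois representations of rational maps.

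The two cases in the theorem emerge from optimizing the ratio $V/D$ over the non-negative weight vector $(w_{j,i})$. The linear program has two extremal configurations: an Ellenberg-Venkatesh type lopsided regime, in which most of the weight is concentrated on a single coefficient at the top layer, and a balanced regime, in which the weight is spread uniformly along the top layer. The threshold $B \ge N^2/4 + N$ separates the two, with $B$ itself being the effective discriminant exponent at the optimum and giving, respectively, the exponents $(B-N/2)/(N^2-N)$ and $(B-N)(N+2)/(N^3-N^2)$. The main obstacle is the interaction between this joint optimization and the genericity sieve: at the extremal weight vectors the parameter space concentrates into a thin subfamily, so that verifying that the sieve cost of excluding proper transitive subgroups of $S(\vec{n})$ remains strictly below the main term — uniformly across both regimes — is where the bulk of the technical work lies.
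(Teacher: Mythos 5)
Your proposal captures the right starting material---Odoni-style towers built from composing generic monic polynomials, a lattice-point count on coefficients, and Hilbert irreducibility---but it diverges from the paper's argument in a way that leaves a genuine gap, and it misattributes the source of the two cases in the conclusion. In the paper, the case distinction $B\ge \frac{N^2}{4}+N$ versus $B\le \frac{N^2}{4}+N$ is \emph{not} the output of an optimization over weight vectors $(w_{j,i})$: the weights on the $T_{u,v}$ are fixed once and for all ($\lVert\alpha_{u,v}\rVert\le Y^{vN_{u-1}}$), and the two regimes are imported verbatim from a black-box criterion (Proposition~\ref{prop 4.3}, which is \cite[Cor.~2.8]{landesman2021improved}) that converts a lower bound $\#\cP_{N,K}(Y;G)\gg Y^{dC}$ on algebraic integers of bounded height into a lower bound on $N_{N,K}(X;G)$. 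The dichotomy lives entirely inside that general height-to-discriminant conversion; it has nothing to do with balancing weight at the ``top layer'' of the tower. Relatedly, your plan to iterate the relative-discriminant formula $\Delta_{M_{j-1}}=\Delta_{M_j}^{n_j}\cdot|N_{M_j/\Q}(\mathcal{D}_{M_{j-1}/M_j})|$ and bound each $\mathcal{D}_{M_{j-1}/M_j}$ by a polynomial discriminant never appears in the paper: no discriminant of $F_\alpha$ or of the $f_j$'s is ever estimated. The paper only controls the \emph{height} of a root $z_\alpha$ (Proposition~\ref{lemma 5.4}, Lemma~\ref{lemma 5.9}) and hands the discriminant question off to Proposition~\ref{prop 4.3}, which internally uses the Ellenberg--Venkatesh trace-zero normalization your sketch also omits.

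The more substantial gap is that the passage from ``count of $\alpha\in\mathfrak{X}'(\cO_K;Y)$'' to ``count of algebraic integers $z$'' is not handled by a sieve at all, and this is where the paper's genuine new work sits. Specializing $F$ at $\alpha$ overcounts roots $z$, because different $\alpha$ can produce the same minimal polynomial. The paper controls this loss with an explicit injection $\Psi'\colon \mathfrak{X}'(\cO_K;Y)\hookrightarrow \cP_{N,K}'(C_3Y;S(\vec{n}))\times\mathcal{B}(C_2Y)$, where $\mathcal{B}$ records the constant terms $F_{j,\alpha}(0)$ of the intermediate composites; the injectivity (Lemmas~\ref{dumb lemma}, \ref{boring lemma}, Proposition~\ref{prop 5.9}) is exactly what produces the subtraction $B=A-\sum_{j<k}N_j$ in the exponent. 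Your proposal has no analogue of this bookkeeping---you propose instead to count directly over rings of integers of the intermediate fields $M_j$, which are not fixed in advance, and then to control non-generic tuples with a quantitative sieve whose cost you explicitly flag as the unresolved ``bulk of the technical work.'' The paper sidesteps all of that: the parameter space is the single affine space $\mathfrak{X}=\op{Spec}A$ over $\cO_K$, Hilbert irreducibility applies directly, and the only combinatorial loss is the factor $\#\mathcal{B}(Y)$. Until you supply the fiber-counting lemma (or an equivalent), the step from your $V$-dimensional count to a count of distinct fields is not justified.
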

In order to get a better feeling for the bounds above, we specialize to the wreath powers $[S_n]^k$.
\begin{lthm}\label{main corollary}
    Let $k,n\in \Z_{\geq 2}$ and let $[S_n]^k=S(n, n, \dots, n)$ be the $k$-fold wreath power of $S_n$. Then, we have that 
    \[N_{n^k, K}(X;[S_n]^k)\gg X^{\delta_{n,k}},\] where 
    \[\delta_{n,k}:=\frac{n^{2k}+n^k-2}{2\left(n^{3k-1}-n^{2k-1}\right)}.\] 
\end{lthm}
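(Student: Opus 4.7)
The plan is to deduce Theorem~\ref{main corollary} as an immediate specialization of Theorem~\ref{main thm}. Setting $\vec{n}=(n,n,\dots,n)\in\Z_{\geq 2}^k$ collapses the product and sum defining $N$ and $B$: immediately $N=n^k$, while the expression for $B$ becomes $\frac{n-1}{2}\sum_{j=1}^{k-1} n^j + \frac{n+1}{2}\, n^k$, a geometric series summing in closed form to $(n^{k+1}+2n^k-n)/2$.

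The next step is to identify which of the two branches of Theorem~\ref{main thm} applies. I would check the inequality $B\leq N^2/4 + N$, which, after clearing denominators and cancelling, simplifies to $2n^{k+1}\leq n^{2k}+2n$. For all $n,k\geq 2$ this holds, since $n^{2k}\geq n^{k+2}\geq 2n^{k+1}$. Hence the second branch of the piecewise bound governs, yielding an exponent of $(B-N)(N+2)/(N^3-N^2)$.

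The final step is the algebraic identification of this exponent with $\delta_{n,k}$. I compute $B-N = \tfrac{n(n^k-1)}{2}$ and $N^3-N^2 = n^{2k}(n^k-1)$, so the factor $n^k-1$ cancels and one is left with $(n^k+2)/(2n^{2k-1})$. Multiplying numerator and denominator by $n^k-1$ and using $(n^k-1)(n^k+2) = n^{2k}+n^k-2$ together with $n^{2k-1}(n^k-1) = n^{3k-1}-n^{2k-1}$ recovers $\delta_{n,k}$ precisely in the stated form, and the theorem follows.

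Since the result is a direct corollary, there is no real mathematical obstacle: the substantive work has been done in Theorem~\ref{main thm}. The only point demanding some care is the case-distinction inequality and the bookkeeping to verify that the two seemingly different rational expressions for the exponent agree; both reduce to straightforward manipulations of the telescoping geometric sum.
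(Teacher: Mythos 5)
Your proof is correct and follows the same route as the paper: specialize Theorem~\ref{main thm} to $\vec{n}=(n,\dots,n)$, compute $B=(n^{k+1}+2n^k-n)/2$ and $N=n^k$, verify $B\leq N^2/4+N$, and simplify the exponent $(B-N)(N+2)/(N^3-N^2)$ to $\delta_{n,k}$. Your verification of the case inequality (reducing to $2n^{k+1}\leq n^{2k}+2n$ and bounding via $n^{2k}\geq n^{k+2}\geq 2n^{k+1}$) is in fact cleaner than the paper's, which contains what appears to be a typo at that step.
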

We note that $\delta_{n,k}\geq \frac{1}{2n^{k-1}}$. 

\subsection{Organization} Including the introduction, the article consists of a total of $6$ sections. In the section \ref{s 2}, we set up notation that is used throughout the article. In section \ref{s 3}, we discuss Malle's conjecture and various generalities on arboreal Galois representations associated to a polynomial function. In greater detail, we first state the precise form of Malle's conjecture and discuss some of the known results in this area. Then, the precise form of Malle's conjecture for iterated wreath products of symmetric groups is discussed. We discuss the tree structure associated to the roots of iterates of a polynomial function, and the associated Galois representation on this tree. We discuss seminal results of Odoni, which shall be used in establishing the results in this article. In the section \ref{s 4}, we discuss various number field counting techniques. We begin the section by discussing a certain refinement of the Hilbert irreducibility theorem due to Cohen \cite{cohen1981} and Landesman, Lemke-Oliver and Thorne \cite{landesman2021improved}. We then outline the strategy of Ellenberg and Venkatesh in proving an asymptotic lower bound for $N_{n, K}(X;S_n)$. We then discuss some generalizations of this method, which can be applied to various subgroups $G\subset S_n$. In particular, we state a criterion due to Pierce, Turnage-Butterbaugh and Wood (cf. Theorem \ref{ptbw}). This criterion does apply to $K=\Q$, and it can be applied to give a bound that is weaker than that of Theorem \ref{main thm}. We refer to Remark \ref{only remark} for further details. Our method relies on a different strategy and we are able to obtain stronger results. Our key contributions are contained in the section \ref{s 5}, in which we make the final preparations for the proof of the Theorems \ref{main thm} and \ref{main corollary}. In this section, we apply results of Odoni to construct a polynomial over a function field over $K$, whose Galois group is $S(\vec{n})$. We then extend and generalize the strategy of Ellenberg and Venkatesh, outlined in section \ref{EV section 4.2}, to prove our main result. These proofs of the Theorems \ref{main thm} and \ref{main corollary} are given in section \ref{s 6}.

\subsection{Outlook} We expect that the methods introduced in this article can be sufficiently generalized to prove similar results for various wreath products of alternating and symmetric groups, i.e., to groups of the form $G_1\wr G_2 \wr \dots \wr G_k$, where $G_k$ is either a symmetric group, or an alternating group. Similar investigations would potentially lead to many new directions in Galois theory, arithmetic statistics and arithmetic dynamics. It is thus not only the result itself, but the method of proof that is of significance, as it leads to new questions as well as the prospect for further refinement.

\subsection{Acknowledgment} The second named author's research is supported by the CRM-Simons postdoctoral fellowship.

\section{Notation}\label{s 2}
This short section is devoted to setting up basic notation.
\begin{itemize}
\item Let $a<b$, we shall denote the set of integers $m$ that lie in the range $a\leq m \leq b$ by $[a,b]$.
    \item Let $K$ be a number field, with $d:=[K:\Q]$. We denote by $\cO_K$ its ring of integers.
    \item Let $\mathbb{A}^r$ denote the $r$-dimensional affine space, and \[\mathbb{A}^r(\cO_K)=\{\alpha=(\alpha_1, \dots, \alpha_r)\mid \alpha_i\in \cO_K, \forall i\}.\]
    \item Fix an algebraic closure $\bar{K}$ of $K$ and set $\op{G}_K:=\op{Gal}(\bar{K}/K)$.
    \item Given a number field extension $L/K$, let $\mathcal{D}_{L/K}$ denote the relative discriminant. Let $N_{K/\Q}:K\rightarrow \Q$ denote the norm map, and set $\Delta_L:=N_{K/\Q}(\mathcal{D}_{L/K})$.
    \item For $n\geq \Z_{\geq 2}$, let $[n]$ be the set of numbers $\{1,2, \dots, n\}$ and the symmetric group of permutations of $[n]$ shall be denoted by $S_n$.
    \item Let $X>0$ be a real variable. Given positive functions $f(X)$ and $g(X)$, we write $f(X)\sim g(X)$ to mean that 
    \[\lim_{X\rightarrow \infty}\frac{f(X)}{g(X)}=1.\]
    \item We write $f(X)\gg g(X)$, if there is a constant $C>0$, such that $f(X)\geq C g(X)$ for all large enough values of $X$. In this article, the function $f(X)$ shall be defined for a given pair $(K, n)$, where $K$ is a number field and $n\in \Z_{\geq 2}$. The implied constant $C$ shall depend on both $K$ and $n$. 
    \item We shall write $f(X)\ll g(X)$ (or $f(X)=O\left(g(X)\right)$) to mean that there is a constant $C>0$ such that $f(X)\leq C g(X)$. 
    \item Let $L$ be a field and $f(X)$ be a non-zero polynomial with coefficients in $L$. Let $L_f$ denote the splitting field of $f(X)$ over $L$. The Galois group $\op{Gal}(L_f/L)$ is denoted $\op{Gal}(f(X)/L)$ and is referred to as the Galois group over $L$ generated by $f(X)$.
    \item For a number field $L/K$, let $\widetilde{L}$ be the Galois closure of $L$ over $K$.
    \item Let $\chi:\op{Gal}(\bar{\Q}/\Q)\rightarrow \hat{\Z}^\times$ denote the cyclotomic character. 
    \item For $\alpha\in K$, set $\lVert \alpha \rVert $ to denote the maximum archimedian valuation of $\alpha$.
    \item Let $f(x)=x^n +a_{1} x^{n-1}+\dots +a_n$ be a monic polynomial with coefficients $a_i\in \cO_K$. The \emph{height} of $f$ is defined as follows \[\lVert f\rVert:=\op{max}\{\lVert a_i\rVert^{\frac{1}{i}}\mid i\in [1, n]\}.\] 
    \item Throughout, $C$ or $C_i$ shall refer to a positive constant that depends on $K$ and $\vec{n}=(n_1, \dots, n_k)$.
\end{itemize}
\section{Arboreal representations and wreath products}\label{s 3}
\par We begin by discussing Malle's conjecture and the notion of a wreath product, as well as generalities on arboreal Galois representations.
\subsection{Malle's conjecture}\label{s 3.1}
\par Let $K$ be a number field with ring of integers $\cO_K$, and set $d:=[K:\Q]$. Let $G$ be a finite transitive subgroup of $S_n$. We shall refer to such a group $G$ as a \emph{permutation group}. Let $L/K$ be a number field extension with $[L:K]=n$ and let $\widetilde{L}$ be the Galois closure of $L$ over $K$. Then, $\op{Gal}(\widetilde{L}/K)$ acts on the set of embeddings of $L$ into $\bar{K}$. Two permutation subgroups $G_1$ and $G_2$ of $S_n$ are isomorphic as permutation groups if there is an isomorphism $G_1\xrightarrow{\sim} G_2$ induced by a reordering of the set $[n]$. Given a finite group $G$, an injective homomorphism $\iota: G\hookrightarrow S_n$ is referred to as a permutation representation of $G$.
\par Let $\iota: G\hookrightarrow S_n$ be a permutation representation. For $X\in \mathbb{R}_{>0}$, set
\[\begin{split}
N_{n,K}(X):=& \#\{L/K\mid L\subset \bar{K}, [L:K]=n, |\Delta_L|\leq X\}, \\
N_{n,K}(X;G, \iota):=& \#\{L/K\mid L\subset \bar{K}, [L:K]=n,
\op{Gal}(\widetilde{L}/K)\simeq G, |\Delta_L|\leq X\}, \\ \end{split}\] where the isomorphism of $\op{Gal}(\widetilde{L}/K)$ with $G$ is that of permutation groups. When it is clear from the context, we suppress the dependence of $N_{n, K}(X;G, \iota)$ on the permutation representation $\iota$ and simply write $N_{n,K}(X;G):=N_{n,K}(X;G, \iota)$. It is clear that 
\[N_{n,K}(X)=\sum_{\iota} N_{n,K}(X; G, \iota),\] where $\iota$ ranges over all isomorphism classes of permutation representations $\iota:G\hookrightarrow S_n$.

\par It is expected that $N_{n, K}(X)\sim c_{n, K} X$, where $c_{n, K}$ is positive constant which depends only on $n$ and $K$, cf. \cite[p.723]{ellenberg2006number}. Asymptotic upper bounds for $N_{n, K}(X)$ have been established in various works.
\begin{enumerate}
    \item Schmidt \cite{schmidt1995number} showed that $N_{n, K}(X)\ll_{n, K} X^{(n+2)/4}$.
    \item Ellenberg and Venkatesh \cite{ellenberg2006number} improved the above, and showed that \[N_{n, K}(X)\ll_{n, K} X^{\op{exp}(C\sqrt{\log n})},\] where $C>0$ is an absolute constant.
    \item Couveignes \cite{couveignes2020enumerating} showed that $N_{n, \Q}(X)\ll_{n} X^{c(\log n)^3}$, where $c>0$ is an (unspecified) absolute constant.
    \item Lemke-Oliver and Thorne \cite{lemke2022upper} improved upon the above result and showed that $N_{n,\Q}(X)\ll_{n} X^{c(\log n)^2}$. One can take $c:=1.564$.
\end{enumerate}
When $\iota: G\hookrightarrow S_n$ is a fixed embedding, the conjectured asymptotic for $N_n(X;G)$ is due to Malle. For a permutation $g\in S_n$, we define its \emph{index} as follows
\[\op{ind}(g):=n-\text{ the number of orbits of }g\text{ on }[n].\] Given a conjugacy class $C$ of $G$, let $\op{ind}(C)$ denote $\op{ind}(g)$, where $g\in C$. For any group $G\neq 1$, set $G^\#:=G\backslash\{1\}$, and set
\[a(G):=\left(\op{min}\{\op{ind}(g)\mid g\in G^\#\}\right)^{-1}.\] We note that $a(G)$ depends not only on $G$, but also on the permutation representation $G\hookrightarrow S_n$. We note that if $G$ contains a transposition, then, $a(G)=1$. In particular, $a(S_n)=1$. It is easy to show that $a(A_n)=\frac{1}{2}$, where $A_n\subset S_n$ is the alternating subgroup.

\par The Galois group $\op{G}_{K}:=\op{Gal}(\bar{K}/K)$ acts on the set of conjugacy classes of $G$ via $\sigma \cdot C:=C^{\chi(\sigma)}$, where $C$ is a conjugacy class and $\sigma\in \op{G}_{K}$. With respect to notation above, set 
\[b(G,K):=\# \left(\{C\in \op{Cl}(G)\mid \op{ind}(C)=a(G)^{-1}\}/\op{G}_{K}\right).\]The conjecture stated below is often referred to as the weak form of Malle's conjecture \cite[p.316]{malle2002distribution}. 
\begin{conjecture}[Malle's conjecture -- weak form]
    Let $G$ be a transitive permutation group and $K$ be a number field. Then, for all $\epsilon>0$, there exist constants $c_1(K,G), c_2(K,G; \epsilon)>0$ such that 
    \[c_1(K,G)X^{a(G)}\leq N_{n,K}(X;G)<c_2(K,G;\epsilon)X^{a(G)+\epsilon},\]
    for all large enough values of $X$.
\end{conjecture}
\par Heuristics supporting the above conjecture are discussed in \cite[section 7]{malle2002distribution}. In \cite{malle2004distribution}, Malle states the following stronger form of the conjecture, predicting the precise asymptotic growth of $N_n(X)$.
\begin{conjecture}[Malle's conjecture -- strong form]\label{Malle strong form}
    Let $n\geq 2$ be an integer and $G$ be a transitive permutation subgroup of $S_n$. Then, with respect to notation above, 
    \[N_{n,K}(X;G)\sim c(K, G)X^{a(G)}(\log X)^{b(K,G)-1},\] where $c(K,G)>0$ is a constant which depends on $K$ and the permutation group $G$.
\end{conjecture}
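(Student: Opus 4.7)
The plan is to work analytically via the Dirichlet series
\[\Phi(s;K,G):=\sum_{L} |\Delta_L|^{-s},\]
where $L$ ranges over the subfields of $\bar K$ of degree $n$ over $K$ with $\op{Gal}(\widetilde L/K)\simeq G$ as permutation groups. Establishing the stated asymptotic is equivalent to showing that $\Phi(s;K,G)$ has abscissa of convergence $s=a(G)$, admits a meromorphic continuation to some neighborhood of that point with a pole of exact order $b(K,G)$ there, and is otherwise holomorphic on the line $\op{Re}(s)=a(G)$. Given such analytic control, a Tauberian theorem of Delange or Landau--Ikehara type produces the claimed $c(K,G)X^{a(G)}(\log X)^{b(K,G)-1}$, with $c(K,G)>0$ read off from the leading Laurent coefficient at $s=a(G)$.

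To implement this, I would first parametrize the fields $L$ using the nonabelian cohomology set $H^1(\op{G}_K,G)$, identifying fields with $G$-torsors cut out by prescribed local ramification data. The discriminant $|\Delta_L|$ then decomposes into a product of local invariants indexed by places $v$ of $K$, with each inertia contribution governed by an element of a conjugacy class $C\subset G$ weighted by $q_v^{-s\cdot\op{ind}(C)}$. Summing over local data, $\Phi(s;K,G)$ should factor approximately as an Euler product whose local factors $\Phi_v(s)$ have their dominant singularity at $s=a(G)$, so that the order of the pole of the global product at $s=a(G)$ counts $\op{G}_K$-orbits of minimum-index conjugacy classes of $G$, recovering the exponent $b(K,G)$.

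For the matching lower bound (the only part predicted unconditionally by the weak form) I would split into cases by the structure of $G$: abelian $G$ via class field theory and ray class characters, following Wright; groups generated by transpositions via geometry-of-numbers parametrizations in the style of Davenport--Heilbronn and Bhargava for $S_3,S_4,S_5$; and iterated or wreath-type groups via the Ellenberg--Venkatesh specialization technique extended in the present paper, refined so as to track the distribution of Frobenius over minimum-index conjugacy classes rather than merely count fields. Aligning the resulting counts with the Tauberian constant would then pin down $c(K,G)$ as a product of local densities of $G$-étale $K_v$-algebras, matching the Bhargava--Malle local-to-global heuristic.

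The principal obstacle is that the statement is literally false as written: Kl\"uners's counterexample exhibits a wreath product in which the interaction between roots of unity in $K$ and the $\op{G}_K$-action on conjugacy classes perturbs the predicted pole order. Any honest completion of the plan must therefore either restrict attention to groups avoiding this obstruction, or replace $b(K,G)$ by the corrected T\"urkelli exponent, which takes into account cohomological obstructions to lifting torsors through central extensions. Even with the corrected exponent, the deepest difficulty is the complete absence of a Selberg-type sieve sharp enough to isolate extensions with Galois group \emph{exactly} $G$ (as opposed to a subgroup of $G$) while preserving the error terms required for Tauberian extraction; this, rather than the soft analytic machinery, is what currently restricts proofs of the strong form to small or very structured $G$.
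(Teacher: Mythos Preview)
The statement you are addressing is labeled in the paper as a \emph{conjecture}, not a theorem, and the paper supplies no proof of it whatsoever. There is therefore nothing in the paper to compare your proposal against. More to the point, the paper explicitly records (immediately after stating the conjecture and listing partial results) that the strong form is \emph{false} as stated, citing Kl\"uners's counterexample for $G=C_3\wr C_2$.

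You are evidently aware of this, since your final paragraph concedes that ``the statement is literally false as written.'' But then what precedes is not a proof proposal at all: it is a heuristic sketch of why the conjecture is plausible, together with an honest admission that the program cannot be completed. A proof cannot have as its principal obstacle the falsity of the statement being proved. If your aim was instead to outline the expected analytic mechanism behind Malle-type asymptotics and to indicate where the correction (e.g.\ T\"urkelli's exponent) enters, that is a reasonable expository exercise, but it should be framed as such rather than as a proof of Conjecture~\ref{Malle strong form}.
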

The strong form of the conjecture is known in various cases, some of which are listed below.
\begin{enumerate}
    \item When $G$ is abelian and $\iota: G\hookrightarrow S_{|G|}$ be the regular representation, Malle's conjecture is resolved by Maki \cite{maki1985density} and Wright \cite{wright1989distribution}.
    \item When $G=S_n$ for $n\leq 5$ and $\iota: S_n\rightarrow S_n$ is the identity, the conjecture is resolved by Davenport, Bhargava, cf. \cite{davenport1971density, bhargava2005density,bhargava2008density, bhargava2010density}.
    \item For $D_4\subset S_4$, the conjecture is resolved by Cohen, Diaz y Diaz and Oliver \cite{cohen2002enumerating}.
    \item Groups of the form $S_n\times A$, where $n=3, 4, 5$ and $|A|$ is coprime to $2,6, 30$ respectively, cf. \cite{wang2021malle}.
    \item Let $G$ be a nontrivial finite nilpotent group and $p$ be the smallest prime that divides $\# G$. Assume that all elements in $G$ that are of order $p$ are central. Then, under these hypotheses, Koymans and Pagano prove the strong form of the Malle conjecture for the regular representation of $G$, cf. \cite{koymans2023malle}.
\end{enumerate}
The above list is not exhaustive. The strong form is however now known to be false. Kl\"uners \cite{kluners2005counter} showed that for the group $G=C_3\wr C_2$, the predicted factor of $\log X$ is too small. The weak version of Malle's conjecture is however expected to be true.

  \subsection{Wreath products}\label{s 3.2} 
\par Let $G_1\subseteq S_m$ and $G_2\subseteq S_n$ be two transitive permutation subgroups. The \emph{wreath product} $G=G_1\wr G_2$ is the semidirect product $G_1^n\rtimes G_2$, where $G_2$ permutes the $n$ copies of $G_1$. We write $g=g_1\rtimes g_2$, where $g_1=(\sigma_1, \dots, \sigma_n)\in G_1^n$. The group $G$ acts on $[m]\times [n]=\{(i,j)\mid i\in [m], j\in [n]\}$ as follows
\[g\cdot (i, j):=(\sigma_j(i), g_2(j)).\]We thus realize $G$ is a subgroup of $S_{mn}$. For $k\in \Z_{\geq 1}$ we set $[S_n]^k$ to denote the $k$-fold wreath product of $S_n$. In greater detail, for $k=1$, we set $[S_n]^1:=S_n$ and $[S_n]^k:=S_n\wr [S_n]^{k-1}$ for $k\geq 2$. Implicit to the construction, we have a natural permutation representation, $[S_n]^k\hookrightarrow S_{n^k}$. For $k>d$, there is a natural quotient map $[S_n]^k\rightarrow [S_n]^d$. We let $[S_n]^{\infty}$ denote the inverse limit $\varprojlim_{k} [S_n]^k$. 
\begin{proposition}
    With respect to notation above, we have that $a([S_n]^k)=a(S_n)=1$. 
\end{proposition}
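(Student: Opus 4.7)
The plan is to unpack the definition $a(G)=(\min_{g\in G^\#}\op{ind}(g))^{-1}$ and exhibit an explicit element of $[S_n]^k$ which acts as a transposition on $[n]^k$, so that the minimum index equals $1$.

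First I would record the trivial bound: for any non-identity $g$ acting faithfully on a finite set $[N]$, at least one orbit has size $\geq 2$, so the number of orbits is at most $N-1$ and hence $\op{ind}(g)\geq 1$. Therefore for any transitive permutation group $G\subseteq S_N$ we automatically have $a(G)\leq 1$, with equality precisely when $G$ contains an element that is a transposition in $S_N$. In particular $a(S_n)=1$ is realized by any transposition.

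Next I would produce such a ``transposition-like'' element in $[S_n]^k$ using the inductive definition $[S_n]^k=S_n\wr [S_n]^{k-1}$, which acts on $[n]\times [n^{k-1}]=[n^k]$ by the rule from section \ref{s 3.2}, namely $g\cdot(i,j)=(\sigma_j(i),g_2(j))$ for $g=(\sigma_1,\dots,\sigma_{n^{k-1}})\rtimes g_2$. Choose a transposition $\tau\in S_n$ and set $g_2=\op{id}\in [S_n]^{k-1}$ and $g_1=(\tau,\op{id},\dots,\op{id})\in S_n^{n^{k-1}}$. Then for $j=1$ the element $g$ acts on the fiber $\{(i,1):i\in[n]\}$ as $\tau$, swapping two points and fixing the rest; for $j\neq 1$ it acts as the identity. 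Thus $g$ is itself a transposition on $[n^k]$, giving $\op{ind}(g)=1$.

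Combining, $\min\{\op{ind}(g):g\in [S_n]^k,\,g\neq 1\}=1$, so $a([S_n]^k)=1=a(S_n)$. There is no real obstacle: the only thing to check carefully is that the semidirect-product action formula is applied correctly in the inductive setup, and that the element constructed is indeed non-identity and fixes all points outside a single $2$-cycle.
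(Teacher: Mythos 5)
Your proof is correct, and it takes a genuinely different route from the paper. The paper invokes Malle's general identity $a(G_1\wr G_2)=a(G_1)$ (Lemma 5.1 of \cite{malle2002distribution}) to reduce $a([S_n]^k)$ to $a(S_n)$, and then observes that a transposition in $S_n$ has index $1$. You instead argue directly: you note the general lower bound $\op{ind}(g)\geq 1$ for any non-identity permutation (hence $a(G)\leq 1$ for any transitive $G\neq 1$, with equality iff $G$ contains a transposition in its given permutation representation), and you then exhibit an explicit transposition inside $[S_n]^k\subset S_{n^k}$ by setting $g_2=\op{id}$ and $g_1=(\tau,\op{id},\dots,\op{id})$ in the decomposition $[S_n]^k=S_n\wr[S_n]^{k-1}$. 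The computation with the action $g\cdot(i,j)=(\sigma_j(i),g_2(j))$ is verified correctly: your element swaps two points of the fiber over $j=1$ and fixes everything else, so it is indeed a transposition and has index $1$. Your approach is more elementary and self-contained, since it avoids citing Malle's wreath-product lemma and instead produces the extremal element explicitly; the paper's approach is shorter and would also cover situations where the inner group is not a symmetric group. Both are sound.
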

\begin{proof}
    According to \cite[Lemma 5.1]{malle2002distribution}, $a(G_1\wr G_2)=a(G_1)$. In particular, \[a([S_n]^k)=a(S_n \wr [S_n]^{k-1})=a(S_n).\] It is easy to see that if $g$ is a transposition in $S_n$, then, $\op{ind}(g)=1$. This implies that $a(S_n)=1$. 
\end{proof}

Therefore, the weak version of Malle's conjecture predicts that 
\[c_1 X\leq N_{n,K}(X;[S_n]^k)<c_2(\epsilon)X^{1+\epsilon},\] for any value of $\epsilon>0$. 
\subsection{Images of arboreal Galois representations}\label{section on arboreal}

\par We introduce the theory arboreal representations, for further details, we refer to \cite{jones2013galois}. We also discuss a result of Odoni \cite{odoni1985galois}, which shall prove to be crucial in establishing our main results in later sections. In this section, we let $K$ be a field of characteristic $0$, with algebraic closure $\bar{K}$, and $f(x)\in K[x]$ be a polynomial of degree $n\geq 2$. For $k\in \Z_{\geq 1}$, let $f^{\circ k}:=f\circ f \circ \dots \circ f$ be the $k$-th iterate of $f$. Thus, $f^{\circ 1}=f$, $f^{\circ 2}=f\circ f$,  $f^{\circ 3}=f\circ f\circ f$ and so on. We let $f^{\circ 0}(x)=x$ denote the identity polynomial. We view these iterates as functions on $\bar{K}$, and thus the family $\{f^{\circ k}\mid k\in \Z_{\geq 1}\}$ gives rise to a dynamical system on $\bar{K}$. For $\alpha\in \bar{K}$, refer to the set of translates $\cO_f(\alpha):=\{f^{\circ k}(\alpha)\mid k\in \Z_{\geq 0}\}$ as the the \emph{orbit of $\alpha$}. The point $\alpha$ is said to be \emph{pre-periodic} (resp. \emph{wandering}) if $\cO_f(\alpha)$ is finite (resp. infinite). Let $t\in K$ be an arbitrary element. We introduce an assumption which is shown to be satisfied for various examples of interest.

\begin{ass}\label{sep ass}
    Let $t\in K$ and $f\in K[x]$ be a polynomial. Assume that for all $k\in \Z_{\geq 1}$, the polynomial $f^{\circ k}(x)-t\in K[x]$ is irreducible.
\end{ass}

For the rest of this section, we shall assume that the Assumption \ref{sep ass} is satisfied for the pair $(f, t)$. Note that in this case, $t$ must be a wandering point. Thus, for all $k\in \Z_{\geq 0}$, the preimage set 
\[f^{-k}(t):=(f^{\circ k})^{-1}(t)=\{z\in \bar{K}\mid f^{\circ k}(z)=t\}\] has cardinality equal to $n^k$. For $k=0,1,2$, we find that
\[\begin{split}
    &f^{-0}(t)=\{t\},\\
    &f^{-1}(t)=\{z\in \bar{K}\mid f(z)=t\},\\
    &f^{-2}(t)=\{z\in \bar{K}\mid f(f(z))=t\},\dots, \text{etc.}
\end{split}\]We identify the splitting field $K_{f^{\circ k}-t}$ with $K\left(f^{-k}(t)\right)$. Since $t$ is a wandering point, $f^{-k}(t)$ and $f^{-m}(t)$ are disjoint unless $k=m$. We shall set $T_k(f, t)$ to be the union $\bigcup_{j\leq k} f^{-j}(t)$, and $T_\infty(f, t)$ is defined as the infinite union $\bigcup_{j=1}^\infty f^{-j}(t)$. The sets $T_k(f,t)$ and $T_\infty(f,t)$ have a natural tree structure. The vertices consist of the elements in $T_k(f, t)$ (resp. $T_\infty(f,t)$) and the vertices $\alpha$ and $\beta$ are connected by an edge if $f(\alpha)=\beta$. For an explicit example, we refer to \cite[p. 417]{kadets2020large}. 

\par Let $\bT_k$ be the perfect $n$-ary rooted tree with height $k$. This tree has a single root, every node of height less than $k$ has $n$ child nodes and all leaves are of height $k$. We view $\bT_k$ as a subgraph of $\bT_{k+1}$, and we let $\bT_\infty$ be the $n$-regular tree with infinite height. We identify $\bT_\infty$ with the infinite union $\bigcup_{k\geq 1} \bT_k$. We have identifications of $\bT_k$ with $T_k(f,t)$ and $\bT_\infty$ with $T_\infty(f,t)$. The point $\{t\}$ is the root of $T_k(f,t)$, and the vertices in $f^{-k}(t)$ are at height $k$. The automorphism group $\op{Aut}(\bT_k)$ (resp. $\op{Aut}(\bT_\infty)$) is isomorphic to the wreath product $[S_n]^k$ (resp. $[S_n]^\infty$).
\par The Galois group $\op{G}_K:=\op{Gal}(\bar{K}/K)$ acts on $\bT_k=T_k(f,t)$ and $\bT_\infty=T_\infty(k,t)$ via automorphisms, and let \[
\begin{split}& \rho_{f,t,k}:\op{G}_K\rightarrow \op{Aut}(\bT_k)\xrightarrow{\sim} [S_n]^k,\\
& \rho_{f,t,\infty}:\op{G}_K\rightarrow \op{Aut}(\bT_\infty)\xrightarrow{\sim} [S_n]^\infty,\\
\end{split}\]
be the associated Galois representations. We refer to $\rho_{f,t,\infty}$ as the \emph{arboreal Galois representation} associated to $(f,t)$, and set \[\rho_{f, k}:=\rho_{f,0, k}\text{ and }\rho_{f, \infty}:=\rho_{f,0, \infty}\] for ease of notation. Let $G_k(f,t)$ (resp. $G_\infty(f,t)$) denote the image of $\rho_{f,t, k}$ (resp. $\rho_{f,t,\infty}$). Note that $G_\infty(f,t)$ is identified with the inverse limit $\varprojlim_k G_k(f,t)$.

\begin{question}
    Let $K$ be a Hilbertian field of characteristic $0$ (cf. \cite{bary2008characterization} for a characterization of such fields). 
    \begin{enumerate}
        \item For which pairs $(f,t)$ does one have that $[\op{Aut}(\bT_\infty):G_\infty(f,t)]<\infty$?
        \item For which pairs is $\rho_{f,t,\infty}$ surjective?
    \end{enumerate}
\end{question}

Motivated by the above question, Odoni proved an important result about the surjectivity of arboreal Galois representations in a general setting. Let $K$ be any field of characteristic $0$ (not necessarily Hilbertean), let $t_0, \dots, t_{n-1}$ be algebraically independent over $K$ and $L$ denote the function field $K(t_0, \dots, t_{n-1})$. In accordance with notation defined above, $G_\infty(f)$ is the image of the arboreal representation 
\[\rho_{f, \infty}:\op{G}_L\rightarrow \op{Aut}(\bT_\infty).\]
\begin{theorem}[Odoni]\label{odoni's theorem}
    Let $K$ be a field of characteristic $0$. With respect to notation above, set $f:=x^n+t_{n-1} x^{n-1}+\dots+ t_1 x +t_0$ and $L:=K(t_0, \dots, t_{n-1})$. Then, the following assertions hold
    \begin{enumerate}
        \item the Assumption \ref{sep ass} is satisfied for $(f,0)$,
        \item the representation
    \[\rho_{f, \infty}:\op{G}_L\rightarrow \op{Aut}(\bT_\infty)\] associated to $f$ and its iterates is surjective. In other words, \[\op{Gal}\left(L\left(f^{-k}(0)\right)/L\right)\simeq [S_n]^k\] for all $k\in \Z_{\geq 1}$.
    \end{enumerate}
\end{theorem}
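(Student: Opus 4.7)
The plan is to prove both parts simultaneously by induction on $k$, maintaining the hypothesis $\Gal(L_k/L)\cong [S_n]^k$, where $L_k:=L(f^{-k}(0))$ denotes the splitting field of $f^{\circ k}$ over $L$. The irreducibility assertion (part (1)) is an immediate consequence, since the wreath product $[S_n]^k$ acts transitively on the $n^k$ leaves of $\bT_k$. The base case $k=1$ is classical: with $\alpha_1,\dots,\alpha_n$ denoting the roots of $f$, the fundamental theorem on symmetric functions identifies $L=K(t_0,\dots,t_{n-1})$ with the fixed field $K(\alpha_1,\dots,\alpha_n)^{S_n}$, giving $[K(\alpha_1,\dots,\alpha_n):L]=n!$ and $\Gal(f/L)\cong S_n$.

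For the inductive step, assume $\Gal(L_{k-1}/L)\cong[S_n]^{k-1}$. The restriction $\Gal(L_k/L)\twoheadrightarrow\Gal(L_{k-1}/L)$ is surjective, and writing $[S_n]^k=(S_n)^{n^{k-1}}\rtimes[S_n]^{k-1}$, it suffices to show the kernel $\Gal(L_k/L_{k-1})$ fills out the base group $(S_n)^{n^{k-1}}$. Over $L_{k-1}$ we have the factorization $f^{\circ k}(x)=\prod_{i=1}^{n^{k-1}}(f(x)-\beta_i)$, where $\{\beta_1,\dots,\beta_{n^{k-1}}\}$ are the roots of $f^{\circ(k-1)}$; since $[S_n]^{k-1}$ acts transitively on the $\beta_i$, the task reduces to showing (a) $f(x)-\beta_1$ has Galois group $S_n$ over $L_{k-1}$, and (b) the splitting fields of the various $f(x)-\beta_i$ over $L_{k-1}$ are pairwise linearly disjoint.

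The approach to (a) proceeds in three stages. First, introduce a fresh transcendental $y$ over $L$: the coefficients $t_{n-1},\dots,t_1,t_0-y$ of $f(x)-y\in L(y)[x]$ are algebraically independent over $K(y)$, so the base case applied over $K(y)$ yields $\Gal(f(x)-y/L(y))\cong S_n$. Second, verify that base-changing to $L_{k-1}(y)$ preserves this Galois group: any common Galois sub-extension of $L_{k-1}(y)/L(y)$ and the splitting field of $f(x)-y$ over $L(y)$ would be detected by the genuinely $y$-dependent polynomial $\mathrm{disc}_x(f(x)-y)\in L[y]$, which is not a square modulo those arising from $L_{k-1}$; hence $\Gal(f(x)-y/L_{k-1}(y))\cong S_n$. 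Third, specialize $y\mapsto \beta_1$ at the maximal ideal $(y-\beta_1)\subset L_{k-1}[y]$: since $\beta_1$ is not a critical value of $f$ (a consequence of the genericity of the $t_i$), the specialization is unramified, and the decomposition group at $\beta_1$ equals the full $S_n$. Claim (b) then follows by iterating this specialization argument: after adjoining the roots of $f(x)-\beta_1$, the polynomial $f(x)-\beta_2$ retains Galois group $S_n$ over the enlarged field by the same mechanism, and so on.

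The principal obstacle is the specialization step: while $L_{k-1}$ is Hilbertian (as a finite extension of a function field over $K$), the usual Hilbert irreducibility theorem guarantees the Galois group is preserved only for ``most'' specializations of $y$, and does not a priori handle the specific algebraic elements $\beta_i$. Establishing that the decomposition group at each $y=\beta_i$ is the full $S_n$ --- equivalently, that the branch locus of the cover defined by $f(x)-y$ over $L_{k-1}(y)$ is disjoint from $\{\beta_1,\dots,\beta_{n^{k-1}}\}$ --- is the delicate crux, and it is here that the algebraic independence of $t_0,\dots,t_{n-1}$ is essentially used to provide the room required for each $\beta_i$ to occupy a sufficiently generic position.
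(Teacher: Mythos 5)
The paper does not actually supply a proof of this theorem: its ``proof'' is a two-line citation to Odoni's Lemma~2.3 and Theorem~1 in \cite{odoni1985galois}. So there is no argument in the paper for your proposal to be compared against; I will evaluate your sketch on its own terms.

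Your overall strategy is the right one, and it matches the shape of Odoni's proof: induct on $k$, use the wreath-product structure $[S_n]^k \cong (S_n)^{n^{k-1}} \rtimes [S_n]^{k-1}$, factor $f^{\circ k}(x) = \prod_i (f(x)-\beta_i)$ over $L_{k-1}$, and reduce to showing that each factor $f(x)-\beta_i$ has Galois group $S_n$ over $L_{k-1}$ together with linear disjointness of the resulting splitting fields. Irreducibility (part (1)) then falls out of transitivity on the leaves. The base case is correct. Also, a useful observation you leave implicit: since the intersection $\Omega_y \cap L_{k-1}(y)$ (with $\Omega_y$ the splitting field of $f(x)-y$ over $L(y)$) is Galois over $L(y)$ and thus corresponds to a normal subgroup of $S_n$, the ``second'' step needs to rule out $A_n$ and $\{1\}$ (and, for $n\le 4$, the Klein four in $S_4$), not merely the index-two possibility; your discriminant remark addresses only the $A_n$ case, and ruling out $\Omega_y \subseteq L_{k-1}(y)$ requires a ramification- or transcendence-degree argument that $\Omega_y/L(y)$ is genuinely ramified in $y$ whereas $L_{k-1}(y)/L(y)$ is constant in $y$.

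The real gap, which you identify yourself but do not close, is the specialization at $y=\beta_i$. The assertion ``the specialization is unramified, and the decomposition group at $\beta_1$ equals the full $S_n$'' is a non-sequitur: unramifiedness gives a well-defined decomposition group which surjects onto $\Gal(f(x)-\beta_1/L_{k-1})$, but it does \emph{not} tell you this decomposition group is all of $S_n$. Hilbert irreducibility guarantees full Galois group for a density-one set of specializations of $y$ in $\cO_{L_{k-1}}$, but $\beta_1$ is a single, specific algebraic element, not a ``random'' one, and has a particular algebraic relationship with $t_0,\dots,t_{n-1}$ via $f^{\circ(k-1)}(\beta_1)=0$. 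Establishing that this particular $\beta_1$ is nevertheless a ``good'' specialization is precisely the content of Odoni's argument (which involves a delicate transcendence/specialization analysis exploiting the $n$ independent transcendentals $t_0,\dots,t_{n-1}$), and it is not supplied by your sketch. Likewise, iterating the same unproved claim does not establish part (b), the pairwise linear disjointness. So the skeleton is correct and parallels Odoni, but the crux is asserted rather than proved; the proposal is incomplete.
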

\begin{proof}
    The first part follows from \cite[Lemma 2.3]{odoni1985galois}, and the second part is \cite[Theorem 1]{odoni1985galois}.
\end{proof}

The above result has the following interesting consequence.

\begin{corollary}\label{cor 3.7}
    Let $K$ be a Hilbertian field of characteristic $0$. Then, for any $k\in \Z_{\geq 1}$ and $n\in \Z_{\geq 2}$, there are infinitely many polynomials $f(x)\in K[x]$ of degree $n$, such that $\op{Gal}\left(K\left(f^{-k}(0)\right)/K\right)$ is isomorphic to $[S_n]^k$.
\end{corollary}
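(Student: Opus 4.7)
\textbf{Proof plan for Corollary \ref{cor 3.7}.}

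The strategy is a direct Hilbert irreducibility argument applied to the generic polynomial of Odoni's theorem. First, I would set up the generic picture: let $t_0,\dots,t_{n-1}$ be algebraically independent over $K$, put $L:=K(t_0,\dots,t_{n-1})$, and consider $F(x):=x^n+t_{n-1}x^{n-1}+\dots+t_1 x+t_0\in L[x]$. By Theorem \ref{odoni's theorem} (Odoni), the Assumption \ref{sep ass} holds for $(F,0)$ and the arboreal representation $\rho_{F,\infty}$ is surjective; in particular the finite level splitting field $M_k:=L\!\left(F^{-k}(0)\right)$ satisfies $\op{Gal}(M_k/L)\simeq [S_n]^k$ as permutation subgroups of $\op{Aut}(\bT_k)$.

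Second, I would invoke Hilbert irreducibility for the Hilbertian field $K$. Concretely, pick a primitive element $\theta\in M_k$ for $M_k/L$ and let $\Phi(T;t_0,\dots,t_{n-1})\in L[T]$ be its minimal polynomial; clearing denominators one may assume $\Phi\in K[t_0,\dots,t_{n-1}][T]$, irreducible over $L[T]$. Because $K$ is Hilbertian, the Hilbert set
\[
\cH:=\bigl\{(a_0,\dots,a_{n-1})\in K^n \ :\ \Phi(T;a_0,\dots,a_{n-1})\text{ is irreducible over }K\text{ of degree }\deg_T\Phi\bigr\}
\]
is infinite (in fact its complement in $K^n$ is a thin set). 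A standard specialization argument then shows that for every $\vec a=(a_0,\dots,a_{n-1})\in\cH$, writing $f_{\vec a}(x):=x^n+a_{n-1}x^{n-1}+\dots+a_0\in K[x]$, the specialized splitting field $K\!\left(f_{\vec a}^{-k}(0)\right)$ is Galois over $K$ with group equal to the generic group $[S_n]^k$: the specialization homomorphism lands in $[S_n]^k$, and irreducibility of $\Phi$ at $\vec a$ forces its image to have the full order $[M_k:L]=\lvert [S_n]^k\rvert$.

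Finally, because the assignment $\vec a\mapsto f_{\vec a}$ is an injection from $K^n$ into the set of monic polynomials of degree $n$ in $K[x]$, the infinitude of $\cH$ yields infinitely many distinct polynomials $f\in K[x]$ of degree $n$ with $\op{Gal}\!\left(K(f^{-k}(0))/K\right)\simeq [S_n]^k$. The only minor subtlety is the passage from an ``irreducible specialization of one auxiliary polynomial $\Phi$'' to ``the full Galois group is preserved'', but this is precisely the content of Hilbert irreducibility for a Hilbertian field together with the fact that specializing a Galois extension can only shrink the Galois group to a subgroup of the generic one; there is no serious obstacle, and the argument is self-contained once Odoni's theorem is in place.
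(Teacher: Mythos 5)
Your proposal is correct and takes the same route the paper does: the paper's proof is a one-line appeal to Theorem~\ref{odoni's theorem} (Odoni) together with the Hilbert irreducibility theorem, and your argument is just the standard spelled-out version of that appeal — specialize the generic polynomial over $K(t_0,\dots,t_{n-1})$ at a Hilbert set of points, noting the Galois group can only shrink under specialization and that irreducibility of the auxiliary resolvent forces equality.
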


\begin{proof}
    The result follows from the Theorem \ref{odoni's theorem} and the Hilbert irreducibility theorem.
\end{proof}
The above result motivates the following conjecture, cf. \cite[Conjecture 7.5]{odoni1985prime}.

\begin{conjecture}[Odoni's conjecture]
    Let $K$ be a Hilbertian field of characteristic $0$. Then, for any $n\in \Z_{\geq 2}$, there exists a polynomial $f(x)\in K[x]$ of degree $n$, such that $\op{Gal}\left(K\left(f^{-k}(0)\right)/K\right)$ is isomorphic to $[S_n]^k$ for all $k\in \Z_{\geq 1}$. In other words, there exists $f$ for which the associated arboreal representation $\rho_{f, \infty}$ is surjective.
\end{conjecture}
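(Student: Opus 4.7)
The plan is to combine Theorem \ref{odoni's theorem}, which secures surjectivity over the generic function field $L = K(t_0, \dots, t_{n-1})$, with the Hilbert property of $K$, in order to specialize down to a single polynomial $f \in K[x]$ of degree $n$. For each fixed $k \in \Z_{\geq 1}$, Theorem \ref{odoni's theorem} paired with the Hilbert irreducibility theorem produces a thin set $\Omega_k \subset K^n$ such that for any $a = (a_0, \dots, a_{n-1}) \notin \Omega_k$, the specialization
\[ f_a(x) := x^n + a_{n-1} x^{n-1} + \dots + a_0 \]
satisfies $\op{Gal}(K(f_a^{-k}(0))/K) \simeq [S_n]^k$. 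The task therefore reduces to exhibiting a single $a \in K^n$ lying outside $\bigcup_{k \geq 1} \Omega_k$, and this is where the difficulty lies: thin sets are closed under finite unions but not countable unions, so Hilbertianity applied once cannot be invoked to kill all of the $\Omega_k$ simultaneously.

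To overcome this I would pursue one of two strategies. The first is a \emph{ramification-theoretic construction}, in the spirit of recent work on Odoni's conjecture over $\Q$ and over number fields: choose $f$ whose critical orbit is carefully positioned so that, for some selected prime $\mathfrak{p}$ of $\cO_K$, each iterate $f^{\circ k}(x)$ exhibits an Eisenstein-like factorization at $\mathfrak{p}$. A Newton-polygon analysis would then show that the decomposition group at $\mathfrak{p}$ contributes a full $n$-cycle at every newly created level of the tree $\bT_\infty$; coupled with inductive control of the index of the image of $\rho_{f, k}$ in $\op{Aut}(\bT_k)$ and a transitivity argument on each fibre, this would force $\rho_{f, k}$ to be surjective for every $k$. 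The second strategy would be a \emph{quantitative Hilbert irreducibility theorem}: if one could bound the density of $\Omega_k$ uniformly in $k$ (e.g.\ in terms of an explicit height function on $K^n$), then a pigeonhole-style or Baire-category diagonalization on specializations $a$ of growing height should produce a single $a$ outside $\bigcup_k \Omega_k$.

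The main obstacle, of course, is that Odoni's conjecture is genuinely open in the stated generality. The ramification-theoretic approach relies on a rich supply of nonarchimedean valuations on $K$, which a general Hilbertian field need not possess; on the other hand, an effective Hilbert irreducibility theorem with the uniformity required by the second strategy is not currently available. I would therefore expect that any complete proof must either exploit additional structure on $K$ beyond Hilbertianity or develop a fundamentally new technique for constraining the arboreal image uniformly in $k$, and that is precisely the barrier which I foresee as the decisive difficulty.
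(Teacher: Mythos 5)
This is a \emph{conjecture}, not a theorem, and the paper offers no proof of it; indeed the paper records (with reference to \cite{odonifalse}) that the statement is actually \emph{false} for a general Hilbertian field of characteristic $0$, and is only known to hold for number fields by the work of Benedetto--Juul and Specter. Your analysis of the core obstruction is correct and well put: Hilbert irreducibility applied to Theorem \ref{odoni's theorem} produces, for each fixed $k$, a thin exceptional set $\Omega_k$, and thinness is not preserved under countable unions, so a single application of Hilbertianity cannot produce one $f$ that is Odoni at every level $k$ simultaneously. Your first remedy, the ramification/Newton-polygon construction at a well-chosen prime of $\cO_K$, is in fact the strategy that succeeds in the number-field case. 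The one thing to sharpen is your closing assessment: you write that the conjecture is ``genuinely open in the stated generality,'' whereas the stronger and relevant fact, noted explicitly in the paper, is that it is \emph{known to fail} over some Hilbertian fields. This confirms, rather than merely suggests, your conclusion that Hilbertianity alone is insufficient and that additional arithmetic structure on $K$ (such as a supply of discrete valuations to run the Eisenstein argument) is genuinely required. In short: there is nothing for you to prove here, and your diagnosis of why the na\"ive Hilbert-specialization route cannot close the argument is the right one, modulo the point that the obstruction is not merely a gap in our knowledge but a counterexample.
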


When $K$ is a number field and $n$ is even, or $[K:\Q]$ is odd, the above conjecture is proven by Benedetto and Juul \cite{benedetto2019odoni}. The proof of the conjecture for all values of $n$ can be found in a preprint of Specter \cite{specter2018polynomials}. Note that the conjecture does not hold for a general Hilbertian field, cf. \cite{odonifalse}.

\section{Number field counting techniques}\label{s 4}

\par Let $K$ be a number field and $d:=[K:\Q]$. In this section, we introduce the general strategy used in establishing asymptotic lower bounds for $N_{n,K}(X;G)$ for a transitive subgroup $G$ of $S_n$. We find it beneficial to give a brief account of the technique of Ellenberg and Venkatesh \cite{ellenberg2006number}, who show that $N_{n,K}(X;S_n)\gg X^{\frac{1}{2}+\frac{1}{n^2}}$. 
\subsection{Hilbert irreducibility}
\par In this section, we discuss Cohen's integral version of the Hilbert irreducibility theorem, cf. \cite{cohen1981}. We follow the notation and conventions from \cite[Appendix]{landesman2021improved}. Let $K$ be a number field, and given a subset $S\subset \mathbb{A}^r(\cO_K)=\cO_K^r$, and $e_1, \dots, e_r\in \mathbb{R}_{>0}$. We set
\[S(Y; e_1, \dots, e_r):=\left\{(\alpha_1, \dots, \alpha_r)\in S\mid \lVert \alpha_i\rVert \leq Y^{e_i}\right\}.\]
It is easy to see that for $S=\mathbb{A}^r(\cO_K)$, as $Y\rightarrow \infty$, 
\[\#\left(\mathbb{A}^r(\cO_K)\right)(Y; e_1, \dots, e_r)\sim c Y^{d\left(\sum_{i=1}^r e_i\right)},\] where $c>0$ is a constant which depends on $K$.
Let \[\op{Prob}_S(T;e_1, \dots, e_r):=\frac{\#S(Y; e_1, \dots, e_r)}{\# \left(\mathbb{A}^r(\cO_K)\right)(Y; e_1, \dots, e_r)}\]
be the probability that a point in $(\alpha_1, \dots, \alpha_r)\in \mathbb{A}^r(\cO_K)$ with coordinates $\lVert \alpha_i\rVert \leq Y^{e_i}$ is in $S$. Let $\mathbf{L}=K(a_1, \dots, a_r)$ be the function field over $K$ in variables $a_1, \dots, a_r$, and let $F(x)\in K[a_1, \dots, a_r][x]$ be a polynomial, such that $G=\op{Gal}(\mathbf{L}_F/\mathbf{L})$. For $\alpha=(\alpha_1, \dots, \alpha_r)\in K^r$, let $F_{\alpha}(x)\in K[x]$ be the specialization of $F(x)$ upon specialization $a_i\mapsto\alpha_i$ for $i=1, \dots, r$. Set $G_{\alpha}:=\op{Gal}(K_{F_{\alpha}}/K)$, and let $S$ be the set of vectors $\alpha\in \mathbb{A}^r(\cO_K)$ for which $G_{\alpha}=G$.
\begin{theorem}[Hilbert irreducibility]\label{Hilbert irred}
    With respect to notation above, we have that 
    \[\lim_{Y\rightarrow \infty} \op{Prob}_S(Y;e_1, \dots, e_r)=1.\]
\end{theorem}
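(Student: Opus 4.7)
The plan is to reduce the statement to a count of $\cO_K$-points lying on certain thin subsets of $\mathbb{A}^r$ and then show these subsets contribute a negligible fraction asymptotically. Set $E = d(e_1+\cdots+e_r)$, so that $\#\mathbb{A}^r(\cO_K)(Y;e_1,\ldots,e_r) \sim c\, Y^E$ with $c > 0$ depending on $K$. It therefore suffices to prove the complement $\mathbb{A}^r(\cO_K) \setminus S$, restricted to the box of height $\leq Y^{e_i}$ in each coordinate, has size $o(Y^E)$.

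First I would characterize the complement Galois-theoretically. Let $H_1,\ldots,H_s$ be representatives of the conjugacy classes of maximal proper subgroups of $G$. Standard Galois theory says that $\alpha \in \mathbb{A}^r(\cO_K) \setminus S$ forces $G_\alpha$ to be contained in a conjugate of some $H_i$, and by the primitive element theorem (applied to the fixed field $\mathbf{L}^{H_i}/\mathbf{L}$) this happens exactly when a certain auxiliary polynomial $g_i(x;a_1,\ldots,a_r) \in \cO_K[a_1,\ldots,a_r][x]$, of degree $m_i := [G:H_i] \geq 2$ in $x$ and irreducible over $\mathbf{L}$, acquires a root in $K$ upon specializing $a_j \mapsto \alpha_j$. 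Thus
\[
\mathbb{A}^r(\cO_K) \setminus S \;\subseteq\; \bigcup_{i=1}^{s} \pi_i\bigl(V_i(\cO_K)\bigr),
\]
where $V_i \subset \mathbb{A}^{r+1}$ is the affine hypersurface $\{g_i(x;a_1,\ldots,a_r) = 0\}$ and $\pi_i$ is projection onto the $a$-coordinates.

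Next I would bound $\#\pi_i\bigl(V_i(\cO_K)\bigr)(Y;e_1,\ldots,e_r)$ for each $i$. The key input is that $g_i$ is irreducible of degree $m_i \geq 2$ in $x$ over $\mathbf{L}$, so after specializing $a_j \mapsto \alpha_j$ the resulting univariate polynomial is irreducible over $K$ for $\alpha$ outside a proper Zariski closed set $Z_i \subsetneq \mathbb{A}^r$. On the Zariski open complement of $Z_i$, the condition that $g_i(x;\alpha)$ has a root in $K$ cuts out a thin set in the sense of Serre; for this the counting bound of Cohen (as formulated in \cite{cohen1981}, and refined in the appendix of \cite{landesman2021improved}) gives
\[
\#\pi_i\bigl(V_i(\cO_K)\bigr)(Y;e_1,\ldots,e_r) \;\ll\; Y^{E - \delta_i}\log(Y)^{?}
\]
for some $\delta_i > 0$ depending on $K$, $r$, and $m_i$. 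The contribution of the thin closed set $Z_i$ itself is of strictly lower order because $Z_i$ is a proper subvariety, so elementary lattice-point counting gives $\#Z_i(\cO_K)(Y;e_1,\ldots,e_r) = O(Y^{E'})$ with $E' < E$.

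Summing over the finitely many maximal subgroups $H_i$ gives $\#(\mathbb{A}^r(\cO_K) \setminus S)(Y;e_1,\ldots,e_r) \ll Y^{E-\delta}$ for $\delta = \min_i \delta_i > 0$, and dividing by $\#\mathbb{A}^r(\cO_K)(Y;e_1,\ldots,e_r) \sim c\,Y^E$ yields the desired limit. The main obstacle is establishing the uniform power-saving bound on thin sets in the multi-variable setting with asymmetric height boxes $\lVert\alpha_i\rVert \leq Y^{e_i}$, where a naive coordinate-wise sieve is too lossy; this is precisely the content of Cohen's sieve estimate, and one must be careful that the bound $Y^{E-\delta}$ absorbs the logarithmic factors arising from the large sieve before one can conclude that the ratio tends to $1$ independently of the choice of exponents $e_1,\ldots,e_r$.
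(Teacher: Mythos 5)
The paper itself does not prove this statement; it simply cites \cite[Theorem A.2]{landesman2021improved}, which in turn is a refinement of Cohen's integral Hilbert irreducibility theorem from \cite{cohen1981}. Your sketch is an attempt to reconstruct the argument behind that citation, and the overall skeleton is the standard one: reduce to maximal proper subgroups $H_1,\ldots,H_s$ of $G$, attach to each a resolvent polynomial $g_i$ via a primitive element of $\mathbf{L}^{H_i}/\mathbf{L}$, observe that the complement of $S$ sits inside the union of the thin sets where $g_i(x;\alpha)$ acquires a root in $K$ together with a proper Zariski closed bad-specialization locus, and then invoke Cohen's sieve to get a power saving over the box count $Y^{E}$. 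That is indeed how the cited result is established, so the route you chose is appropriate.

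There is, however, a genuine error in the middle of the argument. You write that since $g_i$ is irreducible of degree $m_i\geq 2$ over $\mathbf{L}$, ``the resulting univariate polynomial is irreducible over $K$ for $\alpha$ outside a proper Zariski closed set $Z_i$.'' This is false, and it is also self-defeating: if $g_i(x;\alpha)$ were irreducible of degree $\geq 2$ over $K$ for all $\alpha$ outside $Z_i$, it would have no root in $K$ there, and the thin condition you are about to bound with Cohen's sieve would be empty. Reducibility of a specialization is governed by a thin set and \emph{not} by a Zariski closed condition---that is precisely the content of Hilbert irreducibility, and is what makes the sieve nontrivial. The correct role of $Z_i$ is the degenerate specialization locus (vanishing of the discriminant or leading coefficient of $g_i$, so that $g_i(x;\alpha)$ fails to have degree $m_i$ or to be separable), which \emph{is} a proper Zariski closed subvariety. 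Outside $Z_i$ the specialization of the Galois action is well defined up to conjugacy, and ``$G_\alpha$ lies in a conjugate of $H_i$'' is equivalent to ``$g_i(x;\alpha)$ has a root in $K$''; this is the thin set Cohen's estimate bounds by $Y^{E-\delta_i}$. With this correction the proof goes through, but as written the step describing $Z_i$ is wrong and must be fixed before the argument is coherent.
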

\begin{proof}
    For the proof of the result, we refer to \cite[Theorem A.2]{landesman2021improved}.
\end{proof}

\par As an aside, we obtain a quantitative refinement of Corollary \ref{cor 3.7}, in the case when $K$ is a number field. For $n\in \Z_{\geq 2}$, let $\op{Poly}_n(\cO_K;Y)$ consist of all monic polynomials $P(x)$ of degree $n$ with coefficients in $\cO_K$ and $\lVert P\rVert\leq Y$. Any such polynomial 
\[P(x)=x^n+\alpha_{1}x^{n-1}+\alpha_2 x^{n-2}+\dots +\alpha_{n-1} x+\alpha_n\]is associated with a tuple \[\alpha=(\alpha_1, \dots, \alpha_n)\in \left(\mathbb{A}^n(\cO_K)\right)(Y; 1,2,3, \dots, n).\] Let $T_{n,k}(Y)$ be the subset of $P\in\op{Poly}_n(\cO_K;Y)$ for which \[\op{Gal}\left(K(P^{-k}(0))/K\right)\simeq [S_n]^k.\]
\begin{theorem}
    Let $n\in \Z_{\geq 2}$ and $k\in \Z_{\geq 1}$, then, 
    \[\lim_{Y\rightarrow \infty} \left(\frac{\# T_{n,k}(Y)}{\#\op{Poly}_n(\cO_K;Y)}\right)=1.\]
    In other words, the set of polynomials $P(x)$ of degree $n$ with coefficients in $\cO_K$, for which the representation
    \[\rho_{f,k}:\op{G}_K\rightarrow \op{Aut}(\mathbf{T}_k)\]is surjective, has density $1$.
\end{theorem}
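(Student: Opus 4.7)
The plan is to combine Odoni's theorem (Theorem \ref{odoni's theorem}) with Cohen's integral form of Hilbert irreducibility (Theorem \ref{Hilbert irred}) in a straightforward way.

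First I would set the stage by introducing the universal polynomial. Let $a_1, \dots, a_n$ be indeterminates over $K$, set $\mathbf{L} := K(a_1, \dots, a_n)$, and consider
\[
F(x) := x^n + a_1 x^{n-1} + a_2 x^{n-2} + \dots + a_{n-1} x + a_n \in \mathbf{L}[x].
\]
After the harmless variable change $t_{n-i} \leftrightarrow a_i$, Odoni's theorem (Theorem \ref{odoni's theorem}) applies to the pair $(F, 0)$ and yields $\op{Gal}(\mathbf{L}(F^{-k}(0))/\mathbf{L}) \simeq [S_n]^k$. Equivalently, the splitting field of the $k$-fold iterate $F^{\circ k}(x) \in \mathbf{L}[x]$ over $\mathbf{L}$ has Galois group $[S_n]^k$ (with its natural permutation embedding into $S_{n^k}$).

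Next I would invoke Theorem \ref{Hilbert irred} with $r = n$, the polynomial $F^{\circ k}(x)$ in place of $F(x)$, $G = [S_n]^k$, and the exponents $e_i = i$ for $i = 1, \dots, n$. The choice $e_i = i$ matches the height convention $\lVert P\rVert = \max_i \lVert \alpha_i\rVert^{1/i} \leq Y$, which is exactly the condition $\lVert \alpha_i \rVert \leq Y^{e_i}$ cutting out $\op{Poly}_n(\cO_K; Y)$ inside $\mathbb{A}^n(\cO_K)$. Writing $S$ for the set of $\alpha = (\alpha_1, \dots, \alpha_n) \in \mathbb{A}^n(\cO_K)$ such that the specialization $F_\alpha^{\circ k}(x)$ has Galois group $[S_n]^k$ over $K$, Theorem \ref{Hilbert irred} gives
\[
\lim_{Y \to \infty} \op{Prob}_S(Y; 1, 2, \dots, n) = 1.
\]

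Finally I would translate this back to the language of the statement. The correspondence $\alpha \mapsto P_\alpha(x) = x^n + \alpha_1 x^{n-1} + \dots + \alpha_n$ identifies $\op{Poly}_n(\cO_K; Y)$ with $(\mathbb{A}^n(\cO_K))(Y; 1, 2, \dots, n)$, and under this identification the set $S$ truncated by height $Y$ is precisely $T_{n,k}(Y)$, because $K(P_\alpha^{-k}(0))$ is by definition the splitting field of $F_\alpha^{\circ k}(x)$ over $K$. Hence
\[
\frac{\# T_{n,k}(Y)}{\# \op{Poly}_n(\cO_K; Y)} = \op{Prob}_S(Y; 1, 2, \dots, n) \longrightarrow 1
\]
as $Y \to \infty$, which is the desired density statement and the surjectivity of $\rho_{P_\alpha, k}$ for a density-one set of $P_\alpha$.

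There is no real obstacle here beyond matching conventions: the crux is simply that the height normalization $\lVert P \rVert = \max_i \lVert \alpha_i \rVert^{1/i}$ is exactly the weighted-height function appearing in Cohen's Hilbert irreducibility, so one gets the limit for the natural counting function $\op{Poly}_n(\cO_K; Y)$ without any extra work. The only point requiring a brief justification is that iterating does not affect the applicability of Odoni's theorem — the Galois group of $F^{\circ k}(x)$ over $\mathbf{L}$ is, by the tree construction preceding Theorem \ref{odoni's theorem}, identified with $\op{Gal}(\mathbf{L}(F^{-k}(0))/\mathbf{L})$.
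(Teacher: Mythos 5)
Your proof is correct and follows the same route as the paper, which simply cites Theorem \ref{odoni's theorem} and Theorem \ref{Hilbert irred} without elaboration. You have spelled out the bookkeeping the paper leaves implicit: the relabeling $a_i \leftrightarrow t_{n-i}$ so that Odoni's theorem applies, the fact that the splitting field of $F^{\circ k}$ equals $\mathbf{L}(F^{-k}(0))$ so that the arboreal Galois group coincides with the Galois group of the iterated polynomial, and the choice of weights $e_i = i$ that makes $(\mathbb{A}^n(\cO_K))(Y;1,2,\dots,n)$ coincide with $\op{Poly}_n(\cO_K;Y)$ under the paper's height normalization.
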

\begin{proof}
   The result follows directly from Theorem \ref{odoni's theorem} and the Hilbert irreducibility theorem (cf. Theorem \ref{Hilbert irred}).
\end{proof}

\subsection{The method of Ellenberg and Venkatesh}\label{EV section 4.2}
\par Let us first briefly review the construction of Ellenberg and Venkatesh \cite{ellenberg2006number}, who show that $N_{K,n}(X;S_n)\gg X^{\frac{1}{2}+\frac{1}{n^2}}$. Given an extension $L/K$, set
\[\cO_{L}^0:=\{z\in \cO_L\mid \op{Tr}_K^L(z)=0\}.\]
Let $L/K$ be an extension for which $[L:K]=n$, and suppose that $z\in \cO_L^0$ is an element such that $K(z)=L$, then $z$ satisfies a polynomial of the form
\[f(x)=x^n+\alpha_2 x^{n-2}+\alpha_3x^{n-3}+\dots+ \alpha_{n-1} x+\alpha_n,\] with $a_i\in \cO_K$. Consider the polynomial 
\[F(x):=x^n+a_2 x^{n-1}+a_3x^{n-2}+\dots +a_{n-1}x+a_n\]
with coefficients in the polynomial ring $A:=K[a_2, a_3, \dots, a_{n-1}, a_n]$. Let $\mathbf{L}$ denote the function field $K(a_2, \dots, a_n)$,
and recall that $\mathbf{L}_F$ is the splitting field of $F$ over $\mathbf{L}$. Then, the Galois group $\op{Gal}(\mathbf{L}_F/\mathbf{L})$ is isomorphic to $S_n$ (cf. \emph{loc. cit.} for further details). Specializing the variables $a_i$ to values $\alpha_i\in \cO_K$, allows one to construct many extensions $L/K$ of degree $n$, for which $\op{Gal}(\widetilde{L}/K)\simeq S_n$. Let $Y>0$ be a real number and set 
\[\begin{split}& S(Y):=\{z\in \cO_{\bar{K}}\mid \op{Tr}_K^{K(z)}(z)=0, [K(z):K]=n, \lVert z\rVert\leq Y\}, \\ 
& S(Y;S_n):=\{z\in S(Y) \mid \op{Gal}\left(\widetilde{K(z)}/K\right)\simeq S_n\}. \\
\end{split}\]
In order to estimate $\# S(Y)$, one counts the total number of characteristic polynomials 
\[f(x)=x^n+\alpha_2 x^{n-2}+\alpha_3 x^{n-3}+\dots+\alpha_n\] for which $\lVert f\rVert \leq Y$. In other words, one counts the number of tuples $(\alpha_2, \dots, \alpha_n)\in \cO_{\bar{K}}^{(n-1)}$ such that $\lVert a_i \rVert \leq Y^i$ for all $i$. One finds that \[\# S(Y)\gg \#\left(\mathbb{A}^r(\cO_K)\right)(Y; 2,3, 4, \dots, (n-1), n)\sim  c Y^{\left(\frac{n(n+1)}{2}-1\right)d},\] and it follows from the Hilbert irreducibility theorem that the same asymptotic lower bound holds for $\# S(Y;S_n)$. Let $L/K$ be an extension for which $[L:K]=n$ and $\op{Gal}(\widetilde{L}/K)\simeq S_n$. A lower bound for $\# S(Y;S_n)$ gives rise to a lower bound for $N_{n, K}(X;S_n)$, provided one is able to estimate the number of $z\in S(Y;S_n)$ for which $L=K(z)$. It is shown that 
\[M_{L/K}(Y):=\#\{z\in S(Y;S_n)\mid K(z)\simeq L\}\ll \frac{Y^{(n-1)d}}{|\Delta_L|^{\frac{1}{n}}},\]cf. \cite[p. 738, ll.22-27]{ellenberg2006number} for further details. The lower bound for $\# S(Y;S_n)$ and the upper bound for $M_{L/K}(Y)$ together show that 
\[N_{n, K}(X)\gg X^{\frac{1}{2}+\frac{1}{n^2}},\] where $X:=Y^{n(n-1)d}$, we refer to \emph{loc. cit.} for further details.
\subsection{Generalizations to other groups $G\subset S_n$}
\par Let $G$ be a subgroup of $S_n$ for which the inverse Galois problem is solved over $K$. Moreover, suppose that there is a polynomial 
\[F(x)=x^n+\sum_{i=1}^n P_i(a_1, \dots, a_r) x^{n-i},\] with coefficients in the polynomial ring $A:=\cO_K[a_1, \dots, a_r]$, which cuts out a $G$-extension of $\mathbf{L}:=K(a_1, \dots, a_r)$. This is to say that $F(X)$ is irreducible over $\mathbf{L}$ and $\op{Gal}(\mathbf{L}_F/\mathbf{L})\simeq G$ as a permutation subgroup of $S_n$. Given a monomial $a_1^{b_1}a_2^{b_2}\dots a_r^{b_r}$, define the total degree to be the sum $\sum_{i=1}^r b_i$. The total degree of a polynomial is then defined to be the maximal degree of monomials in its support. Let $D$ denote the maximal total degree of the polynomials $P_i$ in the variables $\{a_i\mid i=1, \dots, r\}$. 

\begin{definition}
    Let $F(X)\in K[a_1, \dots, a_r][X]$ be a non-zero polynomial and $\mathbf{L}_F$ be its splitting field over $\mathbf{L}:=K(a_1, \dots, a_{r})$. Then, $F$ is said to be \emph{regular} if \[\mathbf{L}_F\cap \bar{K}(a_1, \dots, a_r)=\mathbf{L}.\]
\end{definition}
When $K=\Q$, and $P$ is regular, then, Pierce, Turnage--Butterbaugh and Wood \cite{pierce2020effective} establish a general asymptotic lower bound for $N_{n, \Q}(X;G)$.
\begin{theorem}[Pierce, Turnage-Butterbaugh, Wood]\label{ptbw}
    Let $n\in \Z_{\geq 2}$ and $G$ be a transitive subgroup of $S_n$. Suppose that $f\in \Q[X, a_1, \dots, a_r]$ is a regular polynomial with total degree $D$ in the $\{a_i\}$ and degree $n$ in $X$, with Galois group $G$ over $\Q(a_1, \dots, a_r)$. Then, for $X\geq 1$ and $\epsilon>0$, we have that 
    \[N_n(X;G)\gg_{f,\epsilon} X^{\frac{1-|G|^{-1}}{D(2n-2)}-\epsilon}.\]
\end{theorem}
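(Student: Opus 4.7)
The plan is to adapt the Ellenberg--Venkatesh strategy recalled in Section~\ref{EV section 4.2} from the specific polynomial $X^n + a_2 X^{n-2} + \dots + a_n$ to an arbitrary regular polynomial $f \in \mathbb{Q}[X, a_1, \dots, a_r]$ with Galois group $G$ over $\mathbf{L} = \mathbb{Q}(a_1, \dots, a_r)$. Fix a real parameter $Y > 0$ and consider integer specializations $\alpha = (\alpha_1, \dots, \alpha_r) \in \mathbb{Z}^r$ with $|\alpha_i| \leq Y$; there are $\asymp Y^r$ such tuples. Since $f$ is regular, Theorem~\ref{Hilbert irred} applied with $K = \mathbb{Q}$ and $e_1 = \dots = e_r = 1$ guarantees that a density-one subset $S(Y)$ of these tuples yields $f_\alpha \in \mathbb{Z}[X]$ irreducible with $\operatorname{Gal}(f_\alpha/\mathbb{Q}) \simeq G$ as permutation subgroups of $S_n$, so $|S(Y)| \gg Y^r$. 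Each such $\alpha$ produces a degree-$n$ number field $L_\alpha = \mathbb{Q}[X]/(f_\alpha)$ with $\operatorname{Gal}(\widetilde{L_\alpha}/\mathbb{Q}) \simeq G$.

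Next I would establish the discriminant bound. Because each coefficient $P_i(a)$ of $f$ has total degree at most $D$ in the $a_i$, specialization gives $|P_i(\alpha)| \ll Y^D$, and since the polynomial discriminant is a polynomial of degree $2n-2$ in the coefficients of a monic degree-$n$ polynomial, we obtain
\[|\Delta_{L_\alpha}| \leq |\operatorname{disc}(f_\alpha)| \ll Y^{D(2n-2)} =: X.\]
Thus every field produced from $S(Y)$ contributes to $N_n(X; G)$ for this choice of $X$.

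The heart of the argument is the uniform multiplicity bound
\[M(L, Y) := \#\{\alpha \in S(Y) : L_\alpha \simeq L\} \ll_{f, \epsilon} Y^{r - 1 + |G|^{-1} + \epsilon}.\]
Granted this, one obtains
\[N_n(X; G) \geq \frac{|S(Y)|}{\max_L M(L, Y)} \gg_{f, \epsilon} Y^{1 - |G|^{-1} - \epsilon} = X^{\frac{1 - |G|^{-1}}{D(2n-2)} - \epsilon}\]
after an innocuous rescaling of $\epsilon$. To prove the bound on $M(L, Y)$, I would parametrize each contributing $\alpha$ by the pairs $(z, \alpha)$ with $z \in \mathcal{O}_L$ a root of $f_\alpha$; each $\alpha$ arises in exactly $n$ such pairs. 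For fixed $z$, the locus $H_z := \{a \in \mathbb{A}^r : f(z, a) = 0\}$ is an affine hypersurface of degree at most $D$, contributing $\ll Y^{r-1}$ integer points in $[-Y, Y]^r$. The admissible $z \in \mathcal{O}_L$ satisfy $\|z\| \ll Y^D$ and are counted by a Minkowski-type lattice-point estimate. The critical $|G|^{-1}$ saving is then extracted by passing from the degree-$n$ cover $\{f(X, a) = 0\} \to \mathbb{A}^r$ to the degree-$|G|$ Galois cover $\widetilde{Y} \to \mathbb{A}^r$ defined by $f$, and observing that the $\alpha \in S(Y)$ yielding a fixed Galois closure $\widetilde{L_\alpha} \simeq \widetilde{L}$ form a $G$-torsor fibre whose integer points in the box are controlled by a Chebotarev-style lattice count.

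The principal obstacle is establishing this sharp multiplicity bound. A crude combination of the Minkowski count on $z$'s (which, after invoking $|\Delta_L| \leq X$, yields $\ll Y^D$) with the lattice-point count on each $H_z$ gives only $M(L, Y) \ll Y^{D + r - 1}$, overshooting by roughly a factor of $Y^D$. Recovering the correct $|G|^{-1}$ saving demands a careful Galois-theoretic analysis of the cover together with an effective Hilbert irreducibility estimate of Cohen type; navigating the combinatorics so that the saving is exactly $1 - |G|^{-1}$ (and not some weaker function of $G$) is where the main technical effort lies.
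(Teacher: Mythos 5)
The paper does not prove this statement: it is a verbatim citation of \cite[Theorem 1.5]{pierce2020effective}, and the ``proof'' in the paper is a single line pointing to that reference. You, by contrast, attempt to reconstruct the argument from scratch, and your scaffolding is reasonable --- specialize in a box of size $Y$, use the regularity of $f$ together with the effective Hilbert irreducibility theorem (Theorem~\ref{Hilbert irred}) to get $\gg Y^r$ specializations with Galois group $G$, use the total-degree-$D$ bound on the coefficients and the fact that the polynomial discriminant has total degree $2n-2$ in those coefficients to set $X := Y^{D(2n-2)}$, and then divide by a uniform multiplicity bound. All of that is fine and does correctly reduce the theorem to the inequality $M(L,Y)\ll_{f,\epsilon} Y^{r-1+|G|^{-1}+\epsilon}$.

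The genuine gap is exactly the one you flag: you do not prove that multiplicity bound, and the sketch you offer does not get there. Fibering over roots $z\in\cO_L$ of $f_\alpha$ and counting integer points on the hypersurfaces $H_z=\{a:f(z,a)=0\}$ produces, as you compute, something of size $\ll Y^{D+r-1}$, which is off by $Y^D$; and the phrase ``Chebotarev-style lattice count on the Galois cover'' is not an argument --- it is a placeholder for the entire technical content of \cite[Theorem 1.5]{pierce2020effective}. In particular, the specific form of the saving, $|G|^{-1}$ rather than some other invariant of $G$ or of the permutation representation, is the part that needs justification, and nothing in your proposal explains where it comes from or why it is sharp. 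As it stands the proposal is a correct reduction to an unproven lemma, not a proof; to complete it you would need to import the actual fiber-counting argument from Pierce--Turnage-Butterbaugh--Wood (or an equivalent), which the paper simply cites rather than reproves.
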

\begin{proof}
    The above result it \cite[Theorem 1.5]{pierce2020effective}.
\end{proof}
The above result can be used in a large number of situations. All that is required is knowledge of the quantity $D$. On the other hand, the method of Ellenberg and Venkatesh requires more precise estimates. Applying the above result to $G=S_n$ yields $N_{n, \Q}(X; S_n)\gg_{n, \epsilon} X^{\frac{1-|n!|^{-1}}{(2n-2)}-\epsilon}$, which is weaker than the result of Ellenberg and Venkatesh, which asserts that $N_{n, K}(X)\gg X^{\frac{1}{2}+\frac{1}{n^2}}$. The exponent of $X$ is $O(\frac{1}{n})$, while that of Ellenberg and Venkatesh is $\frac{1}{2}+O(\frac{1}{n^2})$. This latter estimate is closer to Malle's conjecture, which predicts that $N_{n, K}(X;S_n)\sim a_{n, K} X$.

\par When the number of variables $r$ is suitably large, and it is possible to make precise estimates, it shall be more fruitful to generalize the method of Ellenberg and Venkatesh outlined in the previous section. We end the section by recalling an explicit criterion from \cite{landesman2021improved}, which shall prove to be especially useful. Let $G$ be a transitive subgroup of $S_n$ and $K/\Q$ be a number field and set $d:=[K:\Q]$. We set
\[\cF_{n, K}(X;G):=\{L/K\mid [L:K]=n, \op{Gal}(\widetilde{L}/K)\simeq G, |\Delta_L|\leq X\},\] and note that $N_{n, K}(X;G):=\# \cF_{n, K}(X;G)$. For $Y>0$, we let 
\[\cP_{n, K}(Y;G):=\{z\in \cO_{\bar{K}}\mid \lVert z\rVert \leq Y, [K(z):K]=n, \op{Gal}(\widetilde{K(z)}/K)\simeq G\}.\]
\begin{proposition}\label{prop 4.3}
    With respect to the above notation, let $C>n$ be a constant such that the following asymptotic estimate is satisfied \[\# \cP_{n, K}(Y;G)\gg Y^{dC}.\]
    Then, 
    \[N_{n, K}(X;G)\gg \begin{cases}
         X^{\frac{C-n/2}{n^2-n}} & \text{ if }C\geq \frac{n^2}{4}+n;\\
         X^{\frac{(C-n)(n+2)}{(n^3-n^2)}}& \text{ otherwise.}\\
    \end{cases}\]
\end{proposition}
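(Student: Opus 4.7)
The plan is to adapt and refine the Ellenberg--Venkatesh strategy outlined in Section~\ref{EV section 4.2}, along the lines of Landesman, Lemke-Oliver and Thorne \cite{landesman2021improved}. The starting point is to parameterize $\cP_{n,K}(Y;G)$ by the subfield it generates: each $z \in \cP_{n,K}(Y;G)$ determines $L = K(z) \subset \bar K$ of degree $n$ with $\op{Gal}(\widetilde L / K) \simeq G$, so
\[
\cP_{n,K}(Y;G) = \sum_L M_{L/K}(Y), \qquad M_{L/K}(Y) := \#\{z \in \cO_L : \|z\| \leq Y,\ K(z) = L\},
\]
where the sum ranges over subfields $L$ of the prescribed type. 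I would then establish two inputs: (i) a lattice-point upper bound $M_{L/K}(Y) \ll Y^{nd}/\sqrt{|\Delta_L|}$, arising from the fact that $\cO_L$ sits in $L \otimes_{\Q} \mathbb{R} \cong \mathbb{R}^{nd}$ as a lattice of covolume proportional to $\sqrt{|\Delta_L|}$; and (ii) the discriminant bound $|\Delta_L| \ll Y^{n(n-1)d}$, which comes from the fact that the minimal polynomial of any $z$ with $\|z\| \leq Y$ has $i$-th coefficient of size $O(Y^i)$, hence polynomial discriminant $O(Y^{n(n-1)d})$, and $|\Delta_L|$ divides this.

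With these inputs in hand, I would fix a parameter $X > 0$ and split the sum at $|\Delta_L| = X$. The contribution of $L$ with $|\Delta_L| \leq X$ is bounded by $N_{n,K}(X;G) \cdot Y^{nd}$ after absorbing the square-root factor via Minkowski's lower bound $|\Delta_L| \geq 1$. The contribution of $L$ with $|\Delta_L| > X$ is bounded by $Y^{nd}/\sqrt{X}$ times the number of such $L$, which is at most $N_{n,K}(Y^{n(n-1)d}) \ll Y^{n(n-1)d(n+2)/4}$ by Schmidt's classical estimate. Combining with the hypothesis $\cP_{n,K}(Y;G) \gg Y^{dC}$ yields an inequality of the form
\[
Y^{d(C-n)} \ll N_{n,K}(X;G) + \frac{Y^{d\,n(n-1)(n+2)/4}}{\sqrt{X}}.
\]

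The final step is to optimize $Y$ and $X$. Two regimes emerge, separated by the threshold $C = n^2/4 + n$: when $C$ lies above this threshold the tail term is negligible for the natural scaling $X = Y^{n(n-1)d}$ and one reads off the first exponent $(C-n/2)/(n^2-n)$; when $C$ is below the threshold one must shrink $Y$ to balance the two terms, and this balance produces the weaker exponent $(C-n)(n+2)/(n^3-n^2)$. The chief obstacle will be the delicate bookkeeping needed to extract the precise exponents in each case, together with a uniform control on the lattice-point count that avoids losses from the arithmetic of $\cO_L$; in particular, the factor $n/2$ in the first case, rather than the naive $n$, reflects a sharpening that exploits the $\cO_K$-module structure of $\cO_L$ rather than just its $\Z$-lattice structure. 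A useful consistency check is that the two exponents agree exactly when $C = n^2/4 + n$, as one verifies by direct algebraic manipulation, confirming the case split stated in the proposition.
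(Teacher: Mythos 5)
The paper itself offers no argument for Proposition~\ref{prop 4.3}: it is a one-line citation to \cite[Corollary 2.8]{landesman2021improved}, so your task amounts to reconstructing the Landesman--Lemke-Oliver--Thorne argument. Your framework --- decompose $\cP_{n,K}(Y;G)$ as a sum of $M_{L/K}(Y)$ over fields $L$, split by discriminant, control the tail via Schmidt's bound, and optimize --- is the right shape, and your observation that the two displayed exponents agree at $C=n^2/4+n$ is a genuine consistency check. But the proposal as written does not produce the stated exponents, and the gap is not cosmetic.

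First, the lattice-point bound $M_{L/K}(Y)\ll Y^{nd}/\sqrt{|\Delta_L|}$ is asserted without justification and is false as stated: $\cO_L$ always contains $1$ and its $\cO_K$-multiples, so its first successive minimum is $O(1)$, and for $Y$ of moderate size relative to $|\Delta_L|$ the covolume heuristic undercounts. One has to either pass to the trace-zero sublattice $\cO_L^0$ (as Ellenberg--Venkatesh do, obtaining a bound of the shape $Y^{(n-1)d}/|\Delta_L|^{1/n}$) or analyze the full sequence of successive minima. Second, and more seriously, the bookkeeping does not close. Taking your displayed inequality $Y^{d(C-n)}\ll N_{n,K}(X;G)+Y^{dn(n-1)(n+2)/4}/\sqrt{X}$ and setting $X=Y^{n(n-1)d}$, the tail term is $Y^{dn^2(n-1)/4}$, so it is negligible only when $C\ge n+n^2(n-1)/4$, which is a much more restrictive threshold than the claimed $C\ge n^2/4+n$; and in that regime one reads off $N_{n,K}(X;G)\gg Y^{d(C-n)}=X^{(C-n)/(n^2-n)}$, not $X^{(C-n/2)/(n^2-n)}$. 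Balancing the two terms in the low-$C$ regime likewise does not yield $(C-n)(n+2)/(n^3-n^2)$. You flag the discrepancy yourself by saying the factor $n/2$ ``reflects a sharpening that exploits the $\cO_K$-module structure of $\cO_L$,'' but no such sharpening is supplied, and that is precisely where the content of \cite[Corollary 2.8]{landesman2021improved} lies: the crude step $\sum_{|\Delta_L|\le X} M_{L/K}(Y)\le N_{n,K}(X;G)\cdot\max_L M_{L/K}(Y)$, with the discriminant factor thrown away via $|\Delta_L|\ge 1$, is too lossy, and a finer treatment of that partial sum (together with a correct lattice-point estimate) is needed before the exponents $(C-n/2)/(n^2-n)$ and $(C-n)(n+2)/(n^3-n^2)$ and the threshold $n^2/4+n$ can emerge. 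As it stands, the proposal proves a strictly weaker statement than the proposition claims.
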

\begin{proof}
    The above result is \cite[Corollary 2.8]{landesman2021improved}.
\end{proof}

\section{Wreath products of symmetric groups}\label{s 5}
\par Let $k\in \Z_{\geq 2}$, and $\vec{n}=(n_1, \dots, n_k)\in \Z_{\geq 2}^k$. Throughout, we fix a number field $K$ and let $d:=[K:\Q]$. Set $N:=n_1 n_2 \dots n_k$ and let $S(\vec{n})\subset S_N$ be the wreath product $S_{n_1}\wr S_{n_2}\wr \dots \wr S_{n_k}$. In particular, the $k$-fold wreath product $[S_n]^k=S(n, n, \dots, n)\subset S_{n^k}$. We generalize the method of Ellenberg and Venkatesh to obtain an asymptotic lower bound for $N_{n, K}(X;S(\vec{n}))$. 
\begin{definition}Let $f(x)=\sum_{i=1}^d a_i(T_1, \dots, T_u) x^i$ and $g(x)=\sum_{j=1}^e b_j(S_1, \dots, S_v) x^j$, where $a_i(T_1, \dots, T_u)\in \cO_K[T_1, \dots, T_u]$ and $b_j(S_1, \dots, S_v)\in \cO_K[S_1, \dots, S_v]$. We define the polynomial $g\wr f\in \cO_K[T_1, \dots, T_u, S_1, \dots, S_v][x]$ as follows
\[g\wr f:=f(g(x))=\sum_{i=1}^d a_i(T_1, \dots, T_u) \left(\sum_{j=1}^e b_j(S_1, \dots, S_v) x^j\right)^i.\]
\end{definition}
Let $f_{u, n}(x)=x^n+\sum_{v=1}^{n} T_{u,v} x^{n-v}$ and set \begin{equation}\label{def of F}F:=f_{1,n_1}\wr f_{2, n_2}\wr \dots \wr f_{k, n_k}\in A[x],\end{equation} where \begin{equation}\label{defn of A}A:=\cO_K[\{T_{u,v}\mid u\in [1, k], v\in [1, n_i]\}].\end{equation}

\par For $i\in [1, k]$, we set $g_i:=f_{i, n_i}$, and for $j\in [1, k]$, set
\[F_j:=g_1\wr g_2\wr \dots \wr g_{j-1} \wr g_j\in A[x],\] and thus, $F=F_k$. We shall set $N_j:=\prod_{i=1}^j n_i$ and 
\[D_j:=\begin{cases}
    1 &\text{ if }j=1;\\
    \prod_{i=2}^j n_i & \text{ if }j>1.
\end{cases}\]
We shall set $D:=D_k$. In particular, when $n=n_1=n_2=n_3=\dots =n_k$, we find that $N_j=n^j$ and $D_j=n^{j-1}$.
\begin{lemma}
    With respect to notation above, the following assertions hold.
    \begin{enumerate}
        \item The degree of $F_j(x)$ (as a polynomial in $x$ with coefficients in $A$) is equal to $N_j$.
        \item The maximum total degree of $F_j$ in the variables $\{T_{u,v}\}$ is $\leq D_j$.
    \end{enumerate}
    In particular, the result implies that $F(x)$ is a polynomial of degree equal to $N$, and the maximum total degree of $F$ in the variables $\{T_{u, v}\}$ is $\leq D$.
\end{lemma}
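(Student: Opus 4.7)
The plan is to prove both assertions simultaneously by induction on $j$. The key observation is that by associativity of the operation $(g,f) \mapsto g \wr f = f(g(x))$, one can write
\[F_j(x) \;=\; g_j\bigl(F_{j-1}(x)\bigr) \;=\; F_{j-1}(x)^{n_j} + \sum_{v=1}^{n_j} T_{j,v}\, F_{j-1}(x)^{n_j-v}\]
for every $j \geq 2$, with base case $F_1 = g_1 = x^{n_1} + \sum_{v=1}^{n_1} T_{1,v}\, x^{n_1-v}$. The base case is immediate: $F_1$ has $x$-degree $n_1 = N_1$, and every monomial has total degree at most $1 = D_1$ in the variables $T_{1,v}$.

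For the inductive step on (1), I would note that $F_{j-1}$ is monic in $x$ with leading term $x^{N_{j-1}}$ whose coefficient $1$ is independent of the variables $T_{u,v}$. Consequently, the expansion of $F_{j-1}(x)^{n_j}$ has leading term $x^{n_j N_{j-1}} = x^{N_j}$, and this term cannot be cancelled by the strictly lower $x$-degree summands $T_{j,v}\, F_{j-1}(x)^{n_j-v}$; hence $\deg_x F_j = N_j$, as required.

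For the inductive step on (2), I would invoke the elementary fact that if a polynomial has total degree at most $\delta$ in a set of variables, then its $m$-th power has total degree at most $m\delta$, while a product of polynomials has total degree at most the sum of the total degrees. Applying this to the recursion, the summand $F_{j-1}(x)^{n_j}$ contributes total degree at most $n_j D_{j-1} = D_j$ in the variables $\{T_{u,v}\}$, while each summand $T_{j,v}\, F_{j-1}(x)^{n_j-v}$ contributes total degree at most $1 + (n_j - v)D_{j-1}$. Since $D_{j-1} \geq 1$ for every $j$, the bound $1 + (n_j - v)D_{j-1} \leq v D_{j-1} + (n_j - v)D_{j-1} = n_j D_{j-1} = D_j$ holds for all $v \in [1, n_j]$, so taking the maximum over all summands yields the desired total-degree bound. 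The statement about $F = F_k$ is then the case $j = k$. There is no substantive obstacle here; the only point requiring mild care is fixing the (associative) parenthesization convention for the iterated wreath so that the recursion $F_j = g_j(F_{j-1}(x))$ is unambiguous, after which the argument is a direct induction.
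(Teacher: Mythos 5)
Your proof is correct and follows essentially the same route as the paper: induction on $j$ using the recursion $F_j(x) = g_j(F_{j-1}(x)) = F_{j-1}(x)^{n_j} + \sum_{v=1}^{n_j} T_{j,v}F_{j-1}(x)^{n_j-v}$. You fill in a step the paper leaves terse — explicitly checking that $1 + (n_j - v)D_{j-1} \leq n_j D_{j-1}$ (using $D_{j-1}\geq 1$), whereas the paper simply asserts $\deg_{\{T_{u,v}\}}(F_j)\leq n_j \deg_{\{T_{u,v}\}}(F_{j-1})$ — but the underlying argument is the same.
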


\begin{proof}
    We prove the result by induction on $j$. When $j=1$, we find that \[F_1=g_1=x^{n_1}+\sum_{v=1}^{n_1} T_{1,v} x^{n_1-v}\] and the assertions are clear. Assume that $j\geq 2$, and write $F_j=F_{j-1} \wr g_j$. We find that 
    \[F_j(X)=g_j\left(F_{j-1}(x)\right)=F_{j-1}(x)^{n_j}+\sum_{v=1}^{n_j} T_{j,v} F_{j-1}(x)^{n_j-v}.\] Therefore, we find that 
    \[\op{deg}_x(F_j)=n_j\op{deg}_x (F_{j-1})\text{ and }\op{deg}_{\{T_{u,v}\}}(F_j)\leq n_j\op{deg}_{\{T_{u,v}\}} (F_{j-1}).\] Thus, the result follows by induction on $j$. 
\end{proof}
Odoni's study of arboreal representations led to a criterion for a polynomial to give rise to an $S(\vec{n})$-extensions. First, we recall a general criterion.

\begin{theorem}[Odoni]\label{odoni corollary}
    Let $\mathbf{F}$ be a field of characteristic $0$, and let $f(x)\in \mathbf{F}(x)$ be monic and square free, with $G=\op{Gal}(f(x)/\mathbf{F})$. For $l\geq 2$ let $\mathfrak{G}(x)$ be the generic monic polynomial 
    \[\mathfrak{G}(x)=x^l+a_1 x^{l-1}+a_2x^{l-2}+\dots +a_{l-1} x+a_l.\] Then, $f(\mathfrak{G}(x))$ is squarefree in $\mathbf{F}(a_1, \dots, a_l)[x]$ and 
    \[\op{Gal}(f(\mathfrak{G}(x))/\mathbf{F}(a_1, \dots, a_l))=G\wr S_n.\]
\end{theorem}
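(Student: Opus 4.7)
The plan is to study the Galois group by analysing the tower $L \subset K \cdot L \subset M$, where $L := \mathbf{F}(a_1, \ldots, a_l)$, $K$ is the splitting field of $f$ over $\mathbf{F}$ (so $\op{Gal}(K/\mathbf{F}) = G$), and $M$ is the splitting field of $f(\mathfrak{G}(x))$ over $L$. I denote the roots of $f$ by $\alpha_1, \ldots, \alpha_n$ and, for each $i$, write $\beta_{i,1}, \ldots, \beta_{i,l}$ for the roots of $\mathfrak{G}(x) - \alpha_i$; the roots of $f(\mathfrak{G}(x))$ are then the $\beta_{i,k}$ arranged into $n$ fibres of size $l$ above the $\alpha_i$.

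First I would dispose of squarefreeness. Over $K$ we factor $f(\mathfrak{G}(x)) = \prod_i (\mathfrak{G}(x) - \alpha_i)$. The factors have disjoint root sets, since a common root would force $\alpha_i = \alpha_j$; each factor $\mathfrak{G}(x) - \alpha_i = x^l + a_1 x^{l-1} + \cdots + (a_l - \alpha_i)$ is a generic monic polynomial over the purely transcendental extension $K(a_1, \ldots, a_{l-1}, a_l - \alpha_i)$, hence has nonzero discriminant and is separable. Next, the bottom of the tower is controlled by the fact that $L/\mathbf{F}$ is purely transcendental and hence linearly disjoint from the algebraic extension $K/\mathbf{F}$; together with the inclusion $K \cdot L \subset M$ (which holds because $\alpha_i = \mathfrak{G}(\beta_{i,1}) \in M$ for every $i$), this yields $\op{Gal}((K \cdot L)/L) \cong G$. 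For the top of the tower, over $K \cdot L$ each factor $\mathfrak{G}(x) - \alpha_i$ is once more a generic $S_l$-polynomial, since $a_1, \ldots, a_{l-1}, a_l - \alpha_i$ remain algebraically independent over $K$; hence each $M_i := (K \cdot L)(\beta_{i,1}, \ldots, \beta_{i,l})$ is an $S_l$-extension of $K \cdot L$.

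The hard part is to show that the splitting fields $M_1, \ldots, M_n$ are mutually linearly disjoint over $K \cdot L$, so that $\op{Gal}(M/(K \cdot L)) \cong S_l^n$. I would argue this by induction on $j$, verifying that $\mathfrak{G}(x) - \alpha_{j+1}$ retains Galois group $S_l$ when base changed from $K \cdot L$ to the compositum $M_1 \cdots M_j$; this is the analogue here of the irreducibility statement underlying Odoni's original argument (essentially \cite[Lemma 2.3]{odoni1985galois}, as used in Theorem \ref{odoni's theorem}(1)). The key structural observation is that the relations imposed on each fibre by the elementary symmetric polynomials are $e_s(\beta_{i,1}, \ldots, \beta_{i,l}) = (-1)^s a_s$ for $s < l$ and $e_l(\beta_{i,1}, \ldots, \beta_{i,l}) = (-1)^l (a_l - \alpha_i)$; distinct fibres differ only in their top symmetric function, and that difference is the nonzero constant $\alpha_{j+1} - \alpha_i$. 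A Galois-theoretic analysis using the normal subgroup structure of $S_l$ (quadratic sub-extensions detected by the discriminant, simplicity of $A_l$ for $l \geq 5$, and small-$l$ ad hoc checks) then forbids any nontrivial intersection among the $M_i$.

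Finally, assembling these pieces yields a short exact sequence $1 \to S_l^n \to \op{Gal}(M/L) \to G \to 1$ in which $G$ permutes the $n$ factors $S_l$ exactly as it permutes the roots $\alpha_1, \ldots, \alpha_n$ (Galois elements send $\alpha_i \mapsto \alpha_{\sigma(i)}$ and hence $M_i \mapsto M_{\sigma(i)}$). This semidirect product structure is precisely the wreath product identified in the statement, and recovering it completes the proof.
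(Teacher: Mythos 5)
The paper does not prove this theorem: it is cited verbatim from \cite[Corollary 8.4]{odoni1985galois}, so your proposal is attempting to re-derive a result the authors simply quote. Your overall architecture is the natural one and matches the shape of Odoni's own argument: work up the tower $L \subset K\cdot L \subset M$, observe squarefreeness from the nonvanishing of the generic discriminant, identify $\op{Gal}((K\cdot L)/L) \cong G$ by linear disjointness of a purely transcendental extension from an algebraic one, identify each $\op{Gal}(M_i/(K\cdot L))\cong S_l$ because $a_1,\dots,a_{l-1},a_l-\alpha_i$ remain algebraically independent over $K$, and then assemble. Steps 1 through 4 and the final assembly (modulo the remark below) are fine.

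The step you label ``the hard part'' is, however, left unproved, and it is exactly the mathematical content of Odoni's corollary. Observing that the factors $\mathfrak{G}(x)-\alpha_i$ differ only in the top symmetric function does not by itself yield mutual linear disjointness of $M_1,\dots,M_n$; the phrase ``a Galois-theoretic analysis using the normal subgroup structure of $S_l$ \dots then forbids any nontrivial intersection'' is a plan rather than a proof, and the ``small-$l$ ad hoc checks'' (in particular the extra normal subgroup $V_4\trianglelefteq S_4$, giving a cubic-resolvent subextension that must also be shown not to leak into the compositum) are never carried out. The citation to \cite[Lemma 2.3]{odoni1985galois} is also slightly off: that lemma concerns irreducibility of the iterates $f^{\circ k}(x)-t$, not disjointness of specializations of the generic $S_l$-cover. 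A self-contained way to close the gap, uniform in $l\geq 2$, is via ramification: the branch locus of $M_i$ over $K\cdot L = K(a_1,\dots,a_l)$ lies in the hypersurface $\{\Delta_i=0\}$, where $\Delta_i=\Delta(a_1,\dots,a_{l-1},a_l-\alpha_i)$ is a translate in the $a_l$-direction of the irreducible generic discriminant $\Delta$; these hypersurfaces are pairwise distinct and irreducible; inertia in $\op{Gal}(M_{j+1}/(K\cdot L))\cong S_l$ over the generic point of $\{\Delta_{j+1}=0\}$ is a transposition, whose normal closure is all of $S_l$, so every nontrivial Galois subextension of $M_{j+1}$ ramifies there; but $M_1\cdots M_j$ is unramified at that generic point, whence $M_{j+1}\cap (M_1\cdots M_j)=K\cdot L$. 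Without something of this kind (or the $\mathbf{F}$-algebraic-independence criterion that Odoni actually proves), the step remains a genuine gap.

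One smaller point: deducing from the short exact sequence $1\to S_l^n\to\op{Gal}(M/L)\to G\to 1$ that the group \emph{is} the wreath product requires a splitting, which you do not produce. It is cleaner to note that $\op{Gal}(M/L)$ visibly embeds into the imprimitive wreath product preserving the $n$ fibers of size $l$, and then to conclude equality by comparing orders, $(l!)^n\cdot|G|$ on both sides, once the disjointness in the previous paragraph is established.
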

\begin{proof}
The result is above is \cite[Corollary 8.4]{odoni1985galois}.
\end{proof}

\begin{corollary}\label{odoni contribution}
Let $\vec{n}=(n_1, \dots, n_k)\in \Z_{\geq 2}^k$, and recall from \eqref{defn of A} that \[A:=\cO_K[\{T_{u,v}\mid u\in [1, k], v\in [1, n_i]\}].\] Let $\mathbf{L}$ be the fraction field of $A$ and 
\[F:=g_1\wr g_2\wr \dots \wr g_k=f_{1,n_1}\wr f_{2, n_2}\wr \dots \wr f_{k, n_k}\in A[x],\] as in \eqref{def of F}. Then, the following assertions hold
\begin{enumerate}
    \item $F(x)$ is an irreducible polynomial of degree $N=n_1n_2\dots n_k$ over $\mathbf{L}$,
    \item $\op{Gal}(F(x)/\mathbf{L})$ is isomorphic to $S(\vec{n})=S_{n_1}\wr S_{n_2}\wr \dots \wr S_{n_k}$.
\end{enumerate}
\end{corollary}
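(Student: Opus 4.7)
The plan is to induct on $k$, applying Theorem \ref{odoni corollary} at each step to peel off the innermost factor. Set
\[F'(x) := f_{k,n_k}\bigl(f_{k-1,n_{k-1}}\bigl(\cdots f_{2,n_2}(x)\cdots\bigr)\bigr),\]
so that $F(x) = F'(f_{1,n_1}(x))$. Writing $\mathbf{L}' := K(\{T_{u,v} : 2 \le u \le k,\, 1 \le v \le n_u\})$, we have $\mathbf{L} = \mathbf{L}'(T_{1,1}, \ldots, T_{1,n_1})$, so that the coefficients $T_{1,v}$ of $f_{1,n_1}$ are fresh indeterminates over $\mathbf{L}'$.

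The base case $k = 1$ is the classical fact that the generic monic polynomial $f_{1,n_1}$ of degree $n_1$ has full symmetric Galois group $S_{n_1}$ over $K(T_{1,1}, \ldots, T_{1,n_1})$; transitivity of this group on the $n_1$ roots forces $f_{1,n_1}$ to be irreducible. For the inductive step, a straightforward reindexing of indeterminates allows one to apply the induction hypothesis to the shorter tuple $(n_2, \ldots, n_k)$, yielding that $F'$ is irreducible of degree $n_2 \cdots n_k$ over $\mathbf{L}'$ with $\op{Gal}(F'/\mathbf{L}') \simeq S(n_2, \ldots, n_k)$. We then invoke Theorem \ref{odoni corollary} with $\mathbf{F} = \mathbf{L}'$, $f = F'$, $G = S(n_2, \ldots, n_k)$, and $\mathfrak{G}(x) = f_{1,n_1}(x)$, the latter viewed as the generic monic polynomial of degree $l = n_1$ with coefficients the fresh indeterminates $T_{1,v}$. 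Since $F = F'(f_{1,n_1}(x)) = f(\mathfrak{G}(x))$, the theorem yields that $F$ is squarefree over $\mathbf{L}$ with Galois group the corresponding wreath product of $G$ and $S_{n_1}$.

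Translating that wreath product into the conventions of section \ref{s 3.2}, the resulting group is $S_{n_1} \wr S(n_2, \ldots, n_k)$, which by associativity of the wreath product equals $S_{n_1} \wr S_{n_2} \wr \cdots \wr S_{n_k} = S(\vec{n})$, establishing the second assertion. The first assertion follows at once: the group $S(\vec{n})$, realized as the full automorphism group of the perfect rooted tree with branching profile $\vec{n}$, acts transitively on its $N$ leaves, and transitivity of the Galois action on the roots of the squarefree polynomial $F$ of degree $N$ is equivalent to irreducibility of $F$ over $\mathbf{L}$.

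The principal difficulty is a careful bookkeeping of the wreath-product conventions: the iteration direction produced by Theorem \ref{odoni corollary} (roots of $f$ form the blocks, and $\mathfrak{G}$-preimages live inside each block) must be matched with the iteration direction in $S(\vec{n}) = S_{n_1} \wr \cdots \wr S_{n_k}$ following the convention of section \ref{s 3.2}. Since the wreath product is not commutative as an abstract group operation, this identification is more than cosmetic. Once it is pinned down, however, the induction proceeds mechanically via a single application of Odoni's corollary per step.
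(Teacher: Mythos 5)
Your proof is correct and follows exactly the strategy the paper invokes: induction on $k$ together with Odoni's Corollary (Theorem \ref{odoni corollary}), peeling off the innermost generic factor $f_{1,n_1}$ at each step. The paper's own proof is a one-line remark that the result ``follows by induction on $k$'' from Theorem \ref{odoni corollary}; you have simply carried out that induction in full, and your care with the wreath-product convention (matching the permutation structure from $F'(f_{1,n_1}(x))$ --- blocks indexed by roots of $F'$, with $S_{n_1}$ acting inside each block --- to the convention $G_1 \wr G_2 = G_1^{n}\rtimes G_2$ of section \ref{s 3.2}, giving $S_{n_1}\wr S(n_2,\dots,n_k)=S(\vec n)$) is exactly the bookkeeping the paper leaves implicit.
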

\begin{proof}
    The result follows by induction on $k$, and is an easy consequence of Theorem \ref{odoni corollary}.
\end{proof}

We identify the $\cO_K$-valued points of $\mathfrak{X}=\op{Spec}A$ with $\mathfrak{X}(\cO_K)=\cO_K^m$, where $m:=\sum_{i=1}^k n_i$. For $\alpha=(\alpha_{u,v})\in \mathfrak{X}(\cO_K)$, let $\Phi_\alpha: A[x]\rightarrow \cO_K[x]$ be the map induced by specializing $T_{u,v}\mapsto \alpha_{u,v}$. We shall set \[F_{j,\alpha}(x):=\Phi_\alpha(F_j(x)), g_{j, \alpha}(x):=\Phi_\alpha(g_j(x))\text{ and }F_{\alpha}(x):=\Phi_\alpha(F(x)).\] Note that $G_\alpha:=\op{Gal}(F_\alpha(x)/K)$ is a subgroup of $G$. By the integral version of the Hilbert irreducibility theorem, for most points $\alpha\in \mathfrak{X}(\cO_K)$, we have that $G_\alpha=G$. 

\begin{definition}
    With respect to notation above, we set
    \[\mathfrak{X}(\cO_K;Y):=\left\{ \alpha\in \mathfrak{X}(\cO_K)\mid \lVert \alpha_{u,v}\rVert \leq Y^{v \left(\prod_{i=1}^{u-1}n_i\right)}=Y^{v N_{u-1}}\text{ for all coordinates } \alpha_{i,j}
 \right\},\] where it is understood that $\prod_{i=1}^{0}n_i:=1$.
\end{definition}

\begin{proposition}\label{lemma 5.4}
    Let $Y>0$ and $\alpha\in \mathfrak{X}(\cO_K;Y)$. Then, for some suitably large constant $C_{1, j}>0$, which depends only on $j$, $\vec{n}$ and $K$, we have that 
    \[\lVert F_{j,\alpha}\rVert \leq C_{1, j} Y.\] Setting $C_1:=\op{max}\{C_{1, j}\mid j\in [1, k]\}$, we find that for all $j\in [1, k]$, \[\lVert F_{j,\alpha}\rVert \leq C_1Y.\] In particular, $\lVert F_{\alpha}\rVert \leq C_1Y$.
\end{proposition}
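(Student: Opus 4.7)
The plan is to proceed by induction on $j \in [1,k]$, using the recursion $F_{j,\alpha}(x) = g_{j,\alpha}(F_{j-1,\alpha}(x)) = F_{j-1,\alpha}(x)^{n_j} + \sum_{v=1}^{n_j} \alpha_{j,v}\, F_{j-1,\alpha}(x)^{n_j-v}$ already used in the preceding lemma. The base case $j=1$ is immediate: since $F_{1,\alpha}(x) = x^{n_1} + \sum_{v=1}^{n_1} \alpha_{1,v}\, x^{n_1-v}$ and the condition defining $\mathfrak{X}(\cO_K;Y)$ specializes to $\lVert \alpha_{1,v}\rVert \leq Y^{v}$ (because $N_0 = 1$ by the stated convention), the very definition of the height gives $\lVert F_{1,\alpha}\rVert \leq Y$, so we may take $C_{1,1} = 1$.

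For the inductive step, assume $\lVert F_{j-1,\alpha}\rVert \leq C_{1,j-1} Y$ and write $F_{j-1,\alpha}(x) = x^{N_{j-1}} + \sum_{i=1}^{N_{j-1}} b_i\, x^{N_{j-1}-i}$, so that $\lVert b_i\rVert \leq (C_{1,j-1} Y)^i$. I will show that the coefficient of $x^{N_j-s}$ in $F_{j,\alpha}$ has archimedean absolute value at most $C'\, Y^s$ for each $s \in [1, N_j]$, where $C'$ depends only on $K$, $j$, and $\vec{n}$. Expanding each $F_{j-1,\alpha}(x)^{n_j-v}$ by the multinomial theorem and extracting the coefficient of $x^m$ with $m = N_j - s$, the triangle inequality and multiplicativity of $\lVert\cdot\rVert$ yield a bound of the form $(\text{multinomial constant depending only on } n_j, N_{j-1}) \cdot (C_{1,j-1} Y)^{N_{j-1}(n_j-v)-m}$. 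Multiplying by $\alpha_{j,v}$ introduces the factor $\lVert \alpha_{j,v}\rVert \leq Y^{v N_{j-1}}$, and the key observation is that the exponents of $Y$ telescope: $v N_{j-1} + N_{j-1}(n_j-v) - m = N_j - m = s$, independently of $v$. Summing over $v \in [0, n_j]$ (treating the leading $F_{j-1,\alpha}^{n_j}$ term via the convention $\alpha_{j,0} = 1$) then gives a bound of the form $C'\, Y^s$.

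Taking $s$-th roots and maximizing over $s$ yields $\lVert F_{j,\alpha}\rVert \leq \max(C',1) \cdot Y$, and we set $C_{1,j} := \max(C',1)$. The constant $C_1 := \max_{j \in [1,k]} C_{1,j}$ is then uniform, and the final assertion $\lVert F_\alpha\rVert \leq C_1 Y$ is merely the $j = k$ case. The entire substantive content of the argument is the telescoping identity $v N_{j-1} + N_{j-1}(n_j-v) - m = s$, which is precisely the reason the exponent $v N_{j-1}$ appears in the weighting of $\alpha_{j,v}$ in the definition of $\mathfrak{X}(\cO_K;Y)$. Beyond verifying this compatibility, the main (mild) obstacle is just careful tracking of the combinatorial constants $C_{1,j}$ as they accumulate through the induction; none of the bounds is delicate.
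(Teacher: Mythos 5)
Your proof is correct and follows essentially the same route as the paper's: induction on $j$, expanding $F_{j,\alpha} = g_{j,\alpha}(F_{j-1,\alpha})$, and observing that the exponent of $Y$ in each monomial's coefficient bound telescopes to the right value because of the weights $Y^{vN_{j-1}}$ in the definition of $\mathfrak{X}(\cO_K;Y)$. The only cosmetic difference is that you organize the estimate by target coefficient $x^{N_j-s}$ and name the telescoping identity explicitly, whereas the paper tracks individual monomials $x^{\sum t_i}$ in the multinomial expansion; the substance is identical.
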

\begin{proof}
    We prove the result by induction on $j$. The case when $j=1$ is clear and thus assume that $j\geq 2$. For ease of notation, $f_j:=g_{j, \alpha}$, and thus write $F_{j,\alpha}=f_1\wr f_2\wr \dots \wr f_j$. We express $F_{j,\alpha}$ as $H_1\wr H_2$, where $H_1=F_{j-1,\alpha}=f_1\wr f_2\wr \dots \wr f_{j-1}$ and $H_2=f_j$. By inductive hypothesis, $\lVert H_1\rVert \leq C_{1, j-1}Y$. We write 
    \[\begin{split}
    & H_1=x^{m}+b_1 x^{m-1}+b_2 x^{m-2}+\dots +b_{m-1} x+b_m \\
    & H_2=x^{n}+a_1 x^{n-1}+a_2 x^{n-2}+\dots +a_{n-1} x+a_n, \\
    \end{split}\]
    where $m=N_{j-1}$ and $n=n_j$. Since $\lVert H_1\rVert \leq C_{1, j-1}Y$, we note that $\lVert b_i\rVert \leq (C_{1, j-1}Y)^i$ for all $i$. On the other hand, $a_i=\alpha_{j, i}$ and therefore,
    \[\lVert a_i \rVert\leq Y^{iN_{j-1}}=Y^{im}.\] 
    \par We find that 
    \[F_{j, \alpha}=\sum_{i=0}^n a_i \left(b_0 x^{m}+b_1 x^{m-1}+b_2 x^{m-2}+\dots +b_{m-1} x+b_m\right)^{n-i},
    \]
    where $a_0:=1$ and $b_0:=1$.
    Let's consider the expression \[a_i \left(b_0x^{m}+b_1 x^{m-1}+b_2 x^{m-2}+\dots +b_{m-1} x+b_m\right)^{n-i},\] i.e., the $i$-th term in the sum above. This polynomial is of degree $m(n-i)$ in $x$ and is a sum of monomials of the form
    \[ a_i b_{m-t_1}x^{t_1}b_{m-t_2}x^{t_2}\dots b_{m-t_{n-i}}x^{t_{n-i}}=a_i (b_{m-t_1}\dots b_{m-t_{n-i}})x^{\sum_i t_i}. \]
    We find that 
    \[\lVert a_i (b_{m-t_1}\dots b_{m-t_{n-i}})\rVert \leq Y^{im}(C_{1,j-1}Y)^{m(n-i)-\sum_i t_i}\ll Y^{mn-\sum_i t_i}= Y^{N_j-\sum_i t_i}.\] Therefore, there is a large enough constant $C_{1, j}>0$, such that $\lVert F_{j,\alpha}\rVert \leq C_{1, j} Y$. This completes the inductive step.
\end{proof}
It conveniences to explicitly state an immediate consequence of the above result.
\begin{corollary}\label{cor 5.7}
Let $Y>0$, $\alpha\in \mathfrak{X}(\cO_K;Y)$ and let $C_1$ be the constant defined according to Proposition \ref{lemma 5.4}. Then, for all $j\in [1, k]$, we have that
\[\lVert F_{j,\alpha}(0)\rVert \leq C_2 Y^{N_j},\] where, $C_2:=C_1^{N}$.
\end{corollary}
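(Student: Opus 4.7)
The plan is to extract the bound on the constant term directly from the height bound given by Proposition \ref{lemma 5.4}, together with the definition of the height of a monic polynomial. Recall from the notation section that for a monic polynomial $f(x) = x^n + a_1 x^{n-1} + \cdots + a_n \in \cO_K[x]$, the height satisfies $\lVert a_i \rVert \leq \lVert f \rVert^{i}$ for every index $i \in [1, n]$. Since $F_{j,\alpha}(x)$ is monic of degree $N_j$ in $x$, its constant term is precisely the coefficient $a_{N_j}$ in this expansion, so $F_{j,\alpha}(0) = a_{N_j}$.

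The first step is to invoke Proposition \ref{lemma 5.4}, which yields $\lVert F_{j,\alpha} \rVert \leq C_1 Y$ for every $j \in [1,k]$, where $C_1$ does not depend on $Y$ or on the coordinates of $\alpha$. Combining this with the height inequality above applied to $i = N_j$, we obtain
\[
\lVert F_{j,\alpha}(0) \rVert \;=\; \lVert a_{N_j} \rVert \;\leq\; \lVert F_{j,\alpha} \rVert^{N_j} \;\leq\; (C_1 Y)^{N_j} \;=\; C_1^{N_j}\, Y^{N_j}.
\]
Since $N_j \leq N$ for all $j \in [1,k]$, we may enlarge $C_1$ at the outset if necessary so that $C_1 \geq 1$ (this only makes the conclusion of Proposition \ref{lemma 5.4} weaker, but still valid), which gives $C_1^{N_j} \leq C_1^{N} = C_2$ and hence the desired bound $\lVert F_{j,\alpha}(0) \rVert \leq C_2 Y^{N_j}$.

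There is no real obstacle here: the corollary is essentially a bookkeeping consequence of the previous proposition combined with the definition of the height $\lVert \cdot \rVert$. The only minor point to be careful about is ensuring the constant $C_1$ can be taken to be at least $1$, so that raising it to a smaller power $N_j$ does not produce a larger quantity than $C_1^N$; this is a harmless adjustment.
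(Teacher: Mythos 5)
Your argument is correct and matches the paper's intent: the paper simply states that the corollary is ``a direct consequence of Proposition \ref{lemma 5.4},'' and you have filled in exactly the intended bookkeeping, namely that the constant term $a_{N_j}$ of a monic polynomial satisfies $\lVert a_{N_j}\rVert \le \lVert F_{j,\alpha}\rVert^{N_j} \le (C_1 Y)^{N_j}$, together with the harmless normalization $C_1 \ge 1$ so that $C_1^{N_j} \le C_1^N = C_2$.
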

\begin{proof}
    The above is a direct consequence of Proposition \ref{lemma 5.4}.
\end{proof}

\begin{definition}\label{def of X'}
    Let $\mathfrak{X}'(\cO_K;Y)$ be the set of $\alpha\in \mathfrak{X}(\cO_K;Y)$, such that $F_\alpha(x)$ is irreducible and \[G_\alpha:=\op{Gal}(F_\alpha(x)/K)\simeq S(\vec{n}).\]
\end{definition}
We note that by construction, $G_\alpha$ is identified with a subgroup of $S(\vec{n})$. From the Hilbert irreducibility theorem, we obtain an asymptotic estimate for $\#\mathfrak{X}'(\cO_K;Y)$.
\begin{proposition}\label{prop A}
    Let $\mathfrak{X}'(\cO_K;Y)$ be as in Definition \ref{def of X'}. Then,
    \[\# \mathfrak{X}'(\cO_K;Y)\gg Y^{d A},\]
    where 
    \[A=A(\vec{n}):=\sum_{j=1}^{k} \left(\frac{n_j+1}{2}\right)\left(\prod_{v=1}^j n_v\right).\]
\end{proposition}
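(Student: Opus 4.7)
The plan is to combine a straightforward lattice-point count of the ``box'' $\mathfrak{X}(\cO_K;Y)$ with the integral Hilbert irreducibility theorem (Theorem \ref{Hilbert irred}), applied to the generic polynomial $F(x)\in A[x]$ constructed in \eqref{def of F}. By Corollary \ref{odoni contribution}, the Galois group of $F(x)$ over the function field $\mathbf{L}=\operatorname{Frac}(A)$ is isomorphic to $S(\vec{n})$, so Hilbert irreducibility will produce a density-one set of specializations realizing this group.

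First, I would compute $\#\mathfrak{X}(\cO_K;Y)$ directly. Since $\cO_K$ is a lattice of rank $d$ in $K\otimes_\Q\mathbb{R}$, the number of $\alpha\in\cO_K$ with $\lVert\alpha\rVert\leq T$ is $\asymp T^d$. Because the coordinates $\alpha_{u,v}$ vary independently in the ranges $\lVert\alpha_{u,v}\rVert\leq Y^{vN_{u-1}}$, this yields
\[
\#\mathfrak{X}(\cO_K;Y)\;\asymp\;\prod_{u=1}^{k}\prod_{v=1}^{n_u}Y^{d\,v\,N_{u-1}}\;=\;Y^{d\sum_{u=1}^{k}N_{u-1}\cdot\frac{n_u(n_u+1)}{2}}.
\]
Using $N_{u-1}\cdot n_u = N_u = \prod_{v=1}^{u}n_v$, the exponent simplifies to $d\sum_{u=1}^{k}\frac{n_u+1}{2}N_u = dA(\vec{n})$, so $\#\mathfrak{X}(\cO_K;Y)\asymp Y^{dA}$.

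Next, I would identify $\mathfrak{X}(\cO_K)$ with $\mathbb{A}^m(\cO_K)$ (where $m=\sum_u n_u$) and set the weights $e_{u,v}:=vN_{u-1}$, so that $\mathfrak{X}(\cO_K;Y)=\bigl(\mathbb{A}^m(\cO_K)\bigr)(Y;\{e_{u,v}\})$. Applying Theorem \ref{Hilbert irred} to the polynomial $F$, combined with Corollary \ref{odoni contribution}, shows that the proportion of $\alpha\in\mathfrak{X}(\cO_K;Y)$ for which $F_\alpha$ is irreducible and $G_\alpha\simeq S(\vec{n})$ tends to $1$ as $Y\to\infty$. Consequently, for all sufficiently large $Y$, at least half of the points of $\mathfrak{X}(\cO_K;Y)$ belong to $\mathfrak{X}'(\cO_K;Y)$, giving $\#\mathfrak{X}'(\cO_K;Y)\gg Y^{dA}$ as claimed.

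There is no substantial obstacle here: the counting step is a product of one-dimensional lattice-point counts, and the Galois-theoretic input has already been packaged into Corollary \ref{odoni contribution} and Theorem \ref{Hilbert irred}. The only mildly delicate point is keeping track of the bookkeeping in the exponent, namely the identity $\sum_{u=1}^{k}\sum_{v=1}^{n_u} vN_{u-1} = A(\vec{n})$, which is an elementary summation.
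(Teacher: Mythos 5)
Your proof is correct and follows essentially the same route as the paper: count the lattice points in $\mathfrak{X}(\cO_K;Y)$ coordinate-by-coordinate to get the exponent $dA(\vec{n})$, then invoke Theorem \ref{Hilbert irred} together with Corollary \ref{odoni contribution} to conclude that a density-one proportion of these specializations have Galois group $S(\vec{n})$. The paper states the same argument more tersely, leaving the summation identity $\sum_{u}\sum_{v} vN_{u-1}=A(\vec{n})$ implicit.
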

    \begin{proof}
        We find that 
        \[\# \mathfrak{X}(\cO_K;Y)\gg \prod_{i=1}^k\prod_{j=1}^{n_i}Y^{djN_{i-1}}=Y^{dA},\]
        where we recall that $$N_i:=\begin{cases}
            \prod_{v=1}^{i} n_v & \text{ if }i>0;\\
            1 & \text{ if }i=0.
        \end{cases}$$
        It then follows from Theorem \ref{Hilbert irred} and Corollary \ref{odoni contribution} that
        \[\# \mathfrak{X}'(\cO_K;Y)\gg Y^{d A}.\]
    \end{proof}

We use the asymptotic lower bound for $\#\mathfrak{X}'(\cO_K;Y)$ to obtain a lower bound for $\# \cP_{N, K}\left(Y; S(\vec{n})\right)$, where, we recall that 
\[\cP_{N, K}\left(Y; S(\vec{n})\right):=\{z\in \cO_{\bar{K}}\mid \lVert z\rVert \leq Y, [K(z):K]=N, \op{Gal}(\widetilde{K(z)}/K)\simeq S(\vec{n})\}.\]

Next, we define a map 
\[\Psi: \mathfrak{X}'(\cO_K;Y)\rightarrow \cP_{N, K}\left(C_3 Y; S(\vec{n})\right),\]where $C_3>0$ is a suitably large constants which we shall specify. It conveniences us to first state a basic result that relates the height $\lVert z\rVert$ of $z\in \cO_{\bar{K}}$ to the height of its minimal polynomial.
\begin{lemma}\label{lemma 5.9}
    Given $z\in \cO_{\bar{K}}$, let $f(x)$ be the minimal polynomial of $z$ over $\cO_K$, then, $ \lVert z\rVert \leq 2 \lVert f\rVert$.
\end{lemma}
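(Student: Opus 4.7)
The plan is to establish a standard archimedean root bound (in the spirit of Cauchy's bound) applied to each embedding of $K(z)$ into $\mathbb{C}$. Writing $f(x)=x^n+a_1x^{n-1}+\dots+a_n$ and setting $M:=\lVert f\rVert$, the definition of $\lVert f\rVert$ gives $\lVert a_i\rVert\leq M^i$ for every $i\in[1,n]$. The goal is to show that any archimedean absolute value $|\cdot|$ of $z$ satisfies $|z|\leq 2M$; taking the maximum over archimedean places then yields $\lVert z\rVert\leq 2M=2\lVert f\rVert$.

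First I would fix an embedding of $\bar K$ into $\mathbb{C}$ so that $|\cdot|$ is an archimedean absolute value, and note that $|a_i|\leq M^i$ holds for this absolute value as well. The case $|z|\leq M$ is immediate (then $|z|\leq 2M$), so I would assume $|z|>M$. Using $f(z)=0$, rewrite
\[
z^n=-\sum_{i=1}^n a_i\, z^{n-i},
\]
and apply the triangle inequality to obtain
\[
|z|^n \leq \sum_{i=1}^n |a_i|\,|z|^{n-i} \leq \sum_{i=1}^n M^i |z|^{n-i} = |z|^n\sum_{i=1}^n\left(\tfrac{M}{|z|}\right)^i.
\]

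Next I would bound the geometric sum. Since $|z|>M$, we have $M/|z|<1$, and therefore
\[
\sum_{i=1}^n\left(\tfrac{M}{|z|}\right)^i \leq \sum_{i=1}^\infty \left(\tfrac{M}{|z|}\right)^i = \frac{M}{|z|-M}.
\]
Combining this with the previous inequality gives $1\leq M/(|z|-M)$, hence $|z|\leq 2M$, which finishes the archimedean estimate. Passing to the maximum over all archimedean absolute values of $z$ yields $\lVert z\rVert\leq 2\lVert f\rVert$.

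I do not anticipate any serious obstacle: the argument is the classical Cauchy root bound, and the only subtlety is book-keeping the normalization of $\lVert f\rVert$ (with the exponent $1/i$) which is precisely designed to make $|a_i|\leq M^i$ hold uniformly, so the geometric series trick closes at a factor of $2$.
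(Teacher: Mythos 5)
Your argument is correct and is essentially the same as the paper's: both use $f(z)=0$, the triangle inequality on $z^n=-\sum_{i=1}^n a_i z^{n-i}$, the bound $\lVert a_i\rVert\leq\lVert f\rVert^i$, and the geometric series estimate $\sum_{i\geq 1}u^i=u/(1-u)$ to close at the factor $2$. The only cosmetic difference is that you argue directly (splitting into $|z|\leq M$ and $|z|>M$, place by place, then taking the maximum), whereas the paper phrases it as a contradiction ($\lVert z\rVert>2\lVert f\rVert$ forces $u=\lVert f\rVert/\lVert z\rVert>1/2$) working with $\lVert\cdot\rVert$ throughout; these are logically equivalent.
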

\begin{proof}
     By way of contradiction, $\lVert z\rVert > 2 \lVert f\rVert$, we set $u:=\frac{\lVert f\rVert}{\lVert z\rVert}$ and note that $u\in (0,1/2)$. Write $f(x)=x^n+a_1x^{n-1}+\dots+a_n$. We note that $\lVert a_i\rVert \leq \lVert f\rVert^i$. Thus, we find that 
    \[\lVert z\rVert^n\leq \sum_{i=1}^n \lVert f\rVert^i \lVert z\rVert^{n-i}=\lVert z\rVert^{n}\left(u+u^2+\dots+u^n\right)<\lVert z\rVert^{n}\left(\frac{u}{1-u}\right). \] Therefore, $\left(\frac{u}{1-u}\right)>1$, i.e., $u>1/2$, a contradiction.
\end{proof}

\par Recall that from Definition \ref{def of X'} that for $\alpha\in  \mathfrak{X}'(\cO_K;Y)$, we have that $F_\alpha:=\Phi_\alpha(F)$ is irreducible of degree $N$, and $G_\alpha\simeq S(\vec{n})$. Choose a root $z_\alpha$ of $F_\alpha(x)$. Note that $\widetilde{K(z_\alpha)}$ is an extension of $K$ for which $G_\alpha=\op{Gal}(\widetilde{K(z_\alpha)}/K)$. Note that the choice of root $z_\alpha$ is non-canonical, however, we make one such choice for each $\alpha$. Then, we set $\Psi(\alpha):=z_\alpha$. It follows from Proposition \ref{lemma 5.4} that $\lVert F_\alpha\rVert \leq C_1 Y$. Setting $C_3:=2C_1$, it follows from Lemma \ref{lemma 5.9} that $\lVert z_\alpha\rVert \leq C_3 Y$. 
Thus, we obtain a map 
\[\Psi: \mathfrak{X}'(\cO_K;Y)\rightarrow \cP_{N, K}\left(C_3 Y; S(\vec{n})\right).\]
Note that the map defined above is non-canonical, since it depends on a choice of root $z_\alpha\in \bar{K}$ of $F_\alpha(x)\in K[x]$ for each $\alpha\in \mathfrak{X}'(\cO_K;Y)$. 

\par In order to better describe the cardinality of the fibers of the above map $\Psi$, we write it as a composite of two maps which we shall now describe.
\begin{definition}
    Let $\cP_{N, K}'(Y;S(\vec{n}))$ be the set of tuples $(z; E_0, \dots, E_k)$ such that $z\in \cP_{N, K}(Y;S(\vec{n}))$ and $E_i$ are a tower of fields
\[K=E_k\subset E_{k-1}\subset E_{k-2}\subset \dots \subset E_{1}\subset E_0=K(z),\] such that for all $j\in [0, k-1]$, there is an isomorphism \[\op{Gal}(\widetilde{E_j}/K)\simeq S(n_1 ,\dots, n_{k-j}).\]
\end{definition}
For $\alpha\in \mathfrak{X}'(\cO_K;Y)$, set  $g_{i, \alpha}:=\Phi_\alpha(g_i)$, where we recall that $\Phi_\alpha$ is the evaluation map $ A[x]\rightarrow \cO_K[x]$ induced by specializing $T_{u,v}\mapsto \alpha_{u,v}$, and that 
\[g_i(x)=x^{n_i}+\sum_{v=1}^{n_i} T_{i,v} x^{n_i-v}.\]
Thus, we have that $g_{i,\alpha}(x)=x^{n_i}+\sum_{v=1}^{n} \alpha_{i,v} x^{n_i-v}$. We define a sequence of elements $z_j(\alpha)$ inductively as follows, $z_0(\alpha)=z_\alpha$ and $z_{j+1}(\alpha)=g_{j+1, \alpha}\left(z_j(\alpha)\right)$, as illustrated below
\[z_\alpha=z_0(\alpha)\xrightarrow{g_{1, \alpha}}z_1(\alpha)\xrightarrow{g_{2, \alpha}}\dots \xrightarrow{g_{k-1, \alpha}}z_{k-1}(\alpha)\xrightarrow{g_{k, \alpha}}z_k(\alpha)=0.\]Also, we set $K_{j, \alpha}$ to denote the field $K(z_{j}(\alpha))$. Thus, we have a tower of fields
\[K=K_{k, \alpha}\subset K_{k-1, \alpha}\subset \dots \subset K_{0, \alpha}=K(z_\alpha).\]
Note that for $j\in [0,k-1]$, the isomorphism $G_\alpha\simeq S(\vec{n})$ induces an isomorphism \[\op{Gal}(\widetilde{K_{j, \alpha}}/K)\simeq S(n_1, n_2, \dots, n_{k-j}).\] Let $\mathcal{B}(Y)$ be the set of tuples $(u_1, \dots, u_{k-1})\in \cO_K^{(k-1)}$ such that $\lVert u_i\rVert \leq Y^{ N_i}$ for all coordinates $u_i$. Therefore, we find that \begin{equation}\label{BY eqn}\#\mathcal{B}(Y)\sim Y^{d\left(\sum_{i=1}^{k-1} N_i\right)}.\end{equation}
\begin{definition}
    With respect to notation above, we define the map
\[\Psi':\mathfrak{X}'(\cO_K;Y)\longrightarrow \cP_{N, K}'(C_3 Y;S(\vec{n})) \times \mathcal{B}(C_2 Y)\]as follows
\[\begin{split}
    & \Psi'(\alpha):=(\Psi_1'(\alpha),\Psi_2'(\alpha)), \text{ where,}\\
&\Psi_1'(\alpha):=\left(z_\alpha, K_{0, \alpha}, K_{1,\alpha}, \dots, K_{k, \alpha}\right);\\
&\Psi_2'(\alpha):= \left( F_{1, \alpha}(0), F_{2, \alpha}(0), \dots, F_{k-1, \alpha}(0)\right).
\end{split}\]
\end{definition}
\begin{proposition}
    The map $\Psi'$ above is well defined.
\end{proposition}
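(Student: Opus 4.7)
The plan is to check each of the two components of the target separately. For the second component, membership of $\Psi_2'(\alpha)$ in $\mathcal{B}(C_2 Y)$ is essentially immediate: each $F_{j,\alpha}(0)$ lies in $\cO_K$ (it is the evaluation at $0$ of a polynomial with $\cO_K$-coefficients), and Corollary \ref{cor 5.7} gives $\lVert F_{j,\alpha}(0)\rVert \leq C_2 Y^{N_j}$. Replacing $C_2$ by $\max(C_2,1)$ if necessary, we have $C_2 Y^{N_j} \leq (C_2 Y)^{N_j}$, which is exactly the definition of $\mathcal{B}(C_2 Y)$ (recall \eqref{BY eqn}).

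For the first component I would break $\Psi_1'(\alpha)\in \cP_{N,K}'(C_3 Y;S(\vec{n}))$ into three verifications. First, that $z_\alpha \in \cP_{N,K}(C_3 Y;S(\vec{n}))$: since $\alpha\in\mathfrak{X}'(\cO_K;Y)$, the specialized polynomial $F_\alpha$ is irreducible of degree $N$ over $K$ (so $F_\alpha$ is the minimal polynomial of $z_\alpha$ and $[K(z_\alpha):K]=N$), and its Galois group is isomorphic to $S(\vec{n})$ by definition of $\mathfrak{X}'(\cO_K;Y)$; Proposition \ref{lemma 5.4} gives $\lVert F_\alpha\rVert \leq C_1 Y$, and Lemma \ref{lemma 5.9} upgrades this to $\lVert z_\alpha\rVert \leq 2C_1 Y = C_3 Y$. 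Second, that the $K_{j,\alpha}$ really form a descending tower from $K(z_\alpha)$ down to $K$: the recursive definition $z_{j+1}(\alpha) = g_{j+1,\alpha}(z_j(\alpha))$ expresses $z_{j+1}(\alpha)$ as a polynomial in $z_j(\alpha)$ with $\cO_K$-coefficients, so $K_{j+1,\alpha}\subseteq K_{j,\alpha}$; at the top one has $z_k(\alpha)=0\in K$.

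Third, and most subtly, one has to identify $\op{Gal}(\widetilde{K_{j,\alpha}}/K)$ as the appropriate wreath subproduct. The key observation is that iterating
\[
g_{k,\alpha}\bigl(g_{k-1,\alpha}(\cdots g_{j+1,\alpha}(z_j(\alpha))\cdots)\bigr)=z_k(\alpha)=0
\]
shows $z_j(\alpha)$ is a root of the composition $H_j:=g_{j+1,\alpha}\wr g_{j+2,\alpha}\wr\cdots\wr g_{k,\alpha}$, a polynomial of degree $n_{j+1}n_{j+2}\cdots n_k$ (equivalently, $z_j(\alpha)$ is a depth-$(k-j)$ vertex in the arboreal tree $T_k(F_\alpha,0)$ attached to $F_\alpha$). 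The Galois closure $\widetilde{K_{j,\alpha}}$ is therefore contained in $\widetilde{K(z_\alpha)}$, and $\op{Gal}(\widetilde{K_{j,\alpha}}/K)$ is identified with the image of the wreath product $G_\alpha\simeq S(\vec{n})$ under its natural action on the depth-$(k-j)$ vertices of the tree. By the imprimitivity block structure of the wreath product action described in section \ref{s 3.2} and \ref{section on arboreal}, this image is exactly the wreath-product quotient of $S(\vec{n})$ on its upper $(k-j)$ levels, i.e.\ the permutation group $S(n_1,\ldots,n_{k-j})$. Since we already have $G_\alpha$ equal to the full $S(\vec{n})$, the map to this quotient is surjective.

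The main obstacle, as so often when wreath products intervene, is the bookkeeping in this last step: one must argue not just that $\op{Gal}(\widetilde{K_{j,\alpha}}/K)$ is contained in the claimed wreath product, but that the natural quotient map from $S(\vec{n})$ to the action on level $k-j$ is surjective with the expected image. This is a purely group-theoretic fact about the transitive permutation action of $S(\vec{n})$ on the tree $\bT_k$, and once isolated it reduces (by the associativity of $\wr$) to an induction on $k$ with the basic wreath-product identity $(G_1\wr G_2)/(G_1^{\deg G_2}) \simeq G_2$ supplying the inductive step.
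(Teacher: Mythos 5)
Your proof is correct and takes the same route as the paper's: bound $\lVert z_\alpha\rVert$ via Proposition \ref{lemma 5.4} and Lemma \ref{lemma 5.9}, invoke Corollary \ref{cor 5.7} for the $\Psi_2'$ component, and observe that the intermediate Galois groups carry the required wreath-product structure. You are considerably more careful on that last point: the paper asserts $\op{Gal}(\widetilde{K_{j,\alpha}}/K)\simeq S(n_1,\ldots,n_{k-j})$ as an immediate consequence of $G_\alpha\simeq S(\vec{n})$ in a single sentence, whereas you unpack the tree action, the imprimitivity block structure, and the quotient identity $\left(G_1\wr G_2\right)/\left(G_1^{\deg G_2}\right)\simeq G_2$ that drives the induction; this is the genuine content of the step and the paper leaves it to the reader. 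One internal inconsistency worth flagging, inherited from the paper's own Definition of $\cP_{N,K}'$: your computation correctly exhibits $z_j(\alpha)$ as a root of $g_{j+1,\alpha}\wr\cdots\wr g_{k,\alpha}$, a polynomial of degree $n_{j+1}\cdots n_k$, so the wreath-product quotient acting on the upper $k-j$ levels of the tree is $S(n_{j+1},\ldots,n_k)$, not $S(n_1,\ldots,n_{k-j})$; these coincide only when all $n_i$ are equal (e.g.\ in the $[S_n]^k$ case). The slip is immaterial to the cardinality arguments later in section \ref{s 5}, but it is worth recording that your own degree count already contradicts the label you wrote down. Your replacement $C_2\mapsto\max(C_2,1)$ is a sensible precaution the paper elides.
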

\begin{proof}
It suffices to show that the maps $\Psi_1'$ and $\Psi_2'$ are well defined. For $\alpha\in \mathfrak{X}'(\cO_K;Y)$, the polynomial $F_\alpha(x)$ is irreducible and $\op{Gal}(F_\alpha(x)/K)$. It follows from Lemma \ref{lemma 5.9} that 
\[\lVert z_\alpha\rVert \leq 2\lVert F_\alpha\rVert .\] On the other hand, it follows from Proposition \ref{lemma 5.4} that 
\[\lVert F_\alpha\rVert\leq C_1 Y,\] and therefore, 
\[\lVert z_\alpha\rVert \leq C_2 Y,\] where we recall that $C_2:=2 C_1$. Since $\op{Gal}(F_\alpha(x)/K)\simeq S(\vec{n})$, it follows that
\[\op{Gal}(\widetilde{K_{j, \alpha}}/K)\simeq S(n_1, \dots, n_{k-j}).\] Therefore, $\Psi_1'(\alpha)$ is an element in $\cP_{N, K}'(C_3 Y;S(\vec{n}))$.
\par Corollary \ref{cor 5.7} asserts that for all $j\in [1, k]$, we have that
\[\lVert F_{j,\alpha}(0)\rVert \leq C_2 Y^{N_j}.\] Therefore, $\Psi_2'(\alpha)\in \mathcal{B}(C_2 Y)$ and thus the map $\Psi$ is shown to be well defined.
\end{proof}
\begin{lemma}\label{dumb lemma}
    For $\alpha\in \mathfrak{X}'(\cO_K;Y)$ and $j\in [0, k-1]$, the minimal polynomial of $z_\alpha$ over $K_{j, \alpha}$ is $G_{j, \alpha}(x):=F_{j, \alpha}(x)-z_j(\alpha)$.\end{lemma}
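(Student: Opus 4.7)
The plan is to give a direct verification: I will show that $G_{j,\alpha}(x)$ is a monic polynomial in $K_{j,\alpha}[x]$ of degree $N_j$ having $z_\alpha$ as a root, and that $[K(z_\alpha):K_{j,\alpha}] = N_j$. These together force $G_{j,\alpha}$ to be the minimal polynomial of $z_\alpha$ over $K_{j,\alpha}$.

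First I unwind the wreath composition. Since $g\wr f = f(g(x))$ is associative and $F_j = F_{j-1}\wr g_j$, one has $F_{j,\alpha}(x) = g_{j,\alpha}(g_{j-1,\alpha}(\cdots g_{1,\alpha}(x)\cdots))$. Combining this with the recursion $z_0(\alpha)=z_\alpha$ and $z_{i+1}(\alpha) = g_{i+1,\alpha}(z_i(\alpha))$, an easy induction on $j$ yields the identity $z_j(\alpha) = F_{j,\alpha}(z_\alpha)$, so that $G_{j,\alpha}(z_\alpha) = F_{j,\alpha}(z_\alpha)-z_j(\alpha) = 0$. Because each $g_{i,\alpha}$ is monic of degree $n_i$, induction (as already used in the earlier bound on $\lVert F_{j,\alpha}\rVert$) shows $F_{j,\alpha}(x)$ is monic of degree $N_j$ in $x$; hence so is $G_{j,\alpha}(x)\in K_{j,\alpha}[x]$.

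Next I bound $[K_{j,\alpha}:K]$ from above. Define
\[
H_{j,\alpha}(y) := g_{k,\alpha}(g_{k-1,\alpha}(\cdots g_{j+1,\alpha}(y)\cdots))\in K[y],
\]
a polynomial of degree $N/N_j$. Associativity of the wreath composition (together with the fact that $z_k(\alpha)=0$) gives the factorisation
\[
F_{k,\alpha}(x) = H_{j,\alpha}(F_{j,\alpha}(x)),
\]
so evaluating at $z_\alpha$ yields $H_{j,\alpha}(z_j(\alpha)) = F_{k,\alpha}(z_\alpha) = 0$. Thus $z_j(\alpha)$ is a root of a polynomial of degree $N/N_j$ over $K$, which gives $[K_{j,\alpha}:K] \leq N/N_j$.

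Finally, since $\alpha\in \mathfrak{X}'(\cO_K;Y)$, the polynomial $F_\alpha = F_{k,\alpha}$ is irreducible of degree $N$ over $K$, so $[K(z_\alpha):K]=N$. The multiplicativity of degrees together with the previous bound forces $[K(z_\alpha):K_{j,\alpha}] \geq N_j$. Since $G_{j,\alpha}\in K_{j,\alpha}[x]$ is monic of degree exactly $N_j$ and vanishes at $z_\alpha$, it must coincide with the minimal polynomial of $z_\alpha$ over $K_{j,\alpha}$. The argument is essentially bookkeeping; the only point requiring care is tracking the order of composition in the iterated wreath product so that the identities $z_j(\alpha)=F_{j,\alpha}(z_\alpha)$ and $F_{k,\alpha}=H_{j,\alpha}\circ F_{j,\alpha}$ are lined up correctly.
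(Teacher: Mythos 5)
Your proof is correct, and it takes a genuinely different route from the paper's. The paper argues directly via the Galois group: since $\op{Gal}(F_\alpha(x)/K)\simeq S(\vec{n})$ is the full wreath product (i.e., the full automorphism group of the rooted tree), the roots of $G_{j,\alpha}$ are distinct and $\op{Gal}(\bar{K}/K_{j,\alpha})$ — the stabilizer of the node $z_j(\alpha)$ — permutes them transitively, giving irreducibility of $G_{j,\alpha}$ over $K_{j,\alpha}$. You instead run a degree count: write $F_\alpha = H_{j,\alpha}\circ F_{j,\alpha}$, deduce $[K_{j,\alpha}:K]\le N/N_j$ from $H_{j,\alpha}(z_j(\alpha))=0$, and combine this with $[K(z_\alpha):K]=N$ and multiplicativity to force $[K(z_\alpha):K_{j,\alpha}]\ge N_j$; since $G_{j,\alpha}\in K_{j,\alpha}[x]$ is monic of degree exactly $N_j$ with $G_{j,\alpha}(z_\alpha)=0$, it must be the minimal polynomial. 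Both arguments are sound. The paper's is conceptually tied to the tree/wreath structure and explains \emph{why} one expects irreducibility, while yours is more elementary — it uses only irreducibility of $F_\alpha$ over $K$, not the precise identification of the Galois group with $S(\vec{n})$ — and as a byproduct it pins down $[K_{j,\alpha}:K]=N/N_j$ exactly. One small thing to keep in mind if you wanted to push further: the paper's Galois-theoretic route also shows cleanly that $\op{Gal}(\widetilde{K_{j,\alpha}}/K)\simeq S(n_1,\dots,n_{k-j})$, which is needed elsewhere in the section, so the paper gets more mileage out of invoking the wreath product structure at this point.
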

\begin{proof}
    Since $F_{j, \alpha}(z_\alpha)=z_j(\alpha)$, it is clear that $z_\alpha$ satisfies $G_{j, \alpha}(x)$. Let $\gamma_1, \dots, \gamma_t$ be the roots of $G_{j, \alpha}(x)$ (with repetitions). Since $\op{Gal}(F_\alpha(x)/K)\simeq S(\vec{n})$, it follows that the roots $\gamma_i$ are all distinct, and $\op{Gal}(\bar{K}/K_{j, \alpha})$ acts transitively on $\{\gamma_1, \dots, \gamma_t\}$. Therefore, $G_{j, \alpha}(x)$ is irreducible over $K_{j, \alpha}$, and hence, it is the minimal polynomial of $z_\alpha$ over $K_{j, \alpha}$.
\end{proof}

\begin{lemma}\label{boring lemma}
    Let $\alpha$ and $\alpha'$ be elements in $\mathfrak{X}'(\cO_K; Y)$. Assume that for all $j\in [1, k]$, there is an equality of polynomials $F_{j, \alpha}(x)=F_{j, \alpha'}(x)$. Then, we have that $\alpha=\alpha'$.
\end{lemma}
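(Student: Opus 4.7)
The plan is to argue by induction on $j \in [1,k]$ that $\alpha_{j,v} = \alpha'_{j,v}$ for every $v \in [1, n_j]$. The key observation is that the recursive construction $F_{j,\alpha}(x) = g_{j,\alpha}\bigl(F_{j-1,\alpha}(x)\bigr)$ allows one to extract the coefficients of $g_{j,\alpha}$ from $F_{j,\alpha}$ once $F_{j-1,\alpha}$ is known.

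For the base case $j=1$, the polynomial $F_{1,\alpha}(x) = g_{1,\alpha}(x) = x^{n_1} + \sum_{v=1}^{n_1} \alpha_{1,v} x^{n_1-v}$, so the hypothesis $F_{1,\alpha} = F_{1,\alpha'}$ yields $\alpha_{1,v} = \alpha'_{1,v}$ directly by comparing coefficients. For the inductive step, suppose that $\alpha_{i,v} = \alpha'_{i,v}$ for all $i < j$ and all $v$. Then $F_{j-1,\alpha} = F_{j-1,\alpha'}$, and denoting this common polynomial by $H(x) := F_{j-1,\alpha}(x)$, the hypothesis $F_{j,\alpha} = F_{j,\alpha'}$ expands to
\[
H(x)^{n_j} + \sum_{v=1}^{n_j} \alpha_{j,v}\, H(x)^{n_j-v} \;=\; H(x)^{n_j} + \sum_{v=1}^{n_j} \alpha'_{j,v}\, H(x)^{n_j-v},
\]
so that
\[
\sum_{v=1}^{n_j} (\alpha_{j,v} - \alpha'_{j,v})\, H(x)^{n_j-v} \;=\; 0.
\]

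To conclude, I would invoke $K$-linear independence of the polynomials $\{H(x)^{n_j-v}\}_{v=1}^{n_j}$. Since $H$ is monic of degree $N_{j-1}$, the polynomial $H(x)^{n_j-v}$ is monic of degree $N_{j-1}(n_j-v)$, and these degrees are pairwise distinct as $v$ ranges over $[1, n_j]$. Extracting the leading term of the above identity yields $\alpha_{j,1} = \alpha'_{j,1}$; removing this term and examining the next-highest degree yields $\alpha_{j,2} = \alpha'_{j,2}$; iterating through the remaining degrees completes the inductive step.

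There is no genuine obstacle here: once the recursive formula $F_{j,\alpha} = g_{j,\alpha}(F_{j-1,\alpha})$ is combined with the inductive hypothesis, the argument reduces to the elementary fact that a collection of polynomials with pairwise distinct degrees is linearly independent over the base field. The only point requiring care is keeping track of the degrees $N_{j-1}(n_j - v)$ and confirming their distinctness.
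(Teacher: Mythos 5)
Your proof is correct and follows essentially the same approach as the paper: both reduce, by induction and the recursion $F_{j,\alpha}=g_{j,\alpha}(F_{j-1,\alpha})$, to the identity $\sum_{v=1}^{n_j}(\alpha_{j,v}-\alpha'_{j,v})H(x)^{n_j-v}=0$ and then conclude from the fact that the polynomials $H(x)^{n_j-v}$ have pairwise distinct degrees. The only cosmetic difference is that the paper phrases the final step as a contradiction via a minimal index $v_0$ with $\alpha_{j,v_0}\neq\alpha'_{j,v_0}$, whereas you invoke linear independence directly and peel off coefficients degree by degree.
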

\begin{proof}
    We recall that $F_{j, \alpha}=g_{1, \alpha}\wr g_{2, \alpha}\wr \dots \wr g_{k, \alpha}$, where 
    \[g_{i, \alpha}=x^{n_i}+\sum_{v=1}^{n_i} \alpha_{i,v} x^{n_i-v}.\] It suffices to show that for all $j\in [1, k]$, 
    \[g_{j, \alpha}(x)=g_{j, \alpha'}(x).\] We prove this equality by induction on $j$. For $j=1$, we have that \[g_{1, \alpha}(x)=F_{1, \alpha}(x)=F_{1, \alpha'}(x)=g_{1, \alpha'}(x).\] Therefore, we assume that $j\geq 2$. Let $H(x):=F_{j-1, \alpha}(x)=F_{j-1, \alpha'}(x)$, then, we find that 
    \[g_{j, \alpha}\left(H(x)\right)=g_{j, \alpha}\left(F_{j-1, \alpha}(x)\right)=F_{j, \alpha}(x)= F_{j, \alpha'}(x)=g_{j, \alpha'}\left(F_{j-1, \alpha'}(x)\right)=g_{j, \alpha'}\left(H(x)\right).\]
    For ease of notation, set $a_v:=\alpha_{j,v}$ and $a_v':=\alpha_{j,v}'$. Write $H(x)=\sum_{i=0}^m b_i x^{m-i}$, and thus, 
    \begin{equation}\label{eq 1}0=g_{j, \alpha}\left(H(x)\right)-g_{j, \alpha'}\left(H(x)\right)=\sum_{v=1}^{n_j}(a_v-a_v')H(x)^{n_j-v}.\end{equation} Assume by way of contradiction that for some $v\in [1, n_j]$, we have that $a_v\neq a_v'$. Let $v_0$ be the minimum value in the range $[1, n_j]$ for which $a_{v_0}\neq a_{v_0}'$. Then, the degree of $\sum_{v=1}^{n_j}(a_v-a_v')H(x)^{n_j-v}$ is equal to $(n_j-v_0)\op{deg} H(x)=(n_j-v_0)N_{j-1}$. Therefore, this implies that $v_0=n_j$. However, \eqref{eq 1} then implies that $(a_{n_j}-a_{n_j}')=0$, a contradiction. This completes the inductive step and the proof of the result.
\end{proof}

\begin{proposition}\label{prop 5.9}
    The map $\Psi'$ above is an injection.
\end{proposition}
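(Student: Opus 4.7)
The plan is to leverage Lemma \ref{boring lemma}, which reduces the injectivity of $\Psi'$ to showing that the data $\Psi'(\alpha)$ determines the polynomials $F_{j,\alpha}(x)$ for all $j\in [1,k]$. So the main task will be to reconstruct each $F_{j,\alpha}(x)$ from the information contained in $(z_\alpha,K_{0,\alpha},\dots,K_{k,\alpha})$ together with the constant terms $F_{1,\alpha}(0),\dots,F_{k-1,\alpha}(0)$.

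First I would feed $\alpha$ and $\alpha'$ with $\Psi'(\alpha)=\Psi'(\alpha')$ into Lemma \ref{dumb lemma}. For each $j\in [0,k-1]$, the minimal polynomial of $z_\alpha$ over the subfield $K_{j,\alpha}$ is $F_{j,\alpha}(x)-z_j(\alpha)$, and similarly for $\alpha'$. Since $z_\alpha=z_{\alpha'}$ and $K_{j,\alpha}=K_{j,\alpha'}$ as subfields of $\bar K$, these two minimal polynomials must agree, which yields the identity
\[
F_{j,\alpha}(x)-F_{j,\alpha'}(x)=z_j(\alpha)-z_j(\alpha')
\]
as a constant in $x$. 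Evaluating at $x=0$ and using the equality $F_{j,\alpha}(0)=F_{j,\alpha'}(0)$ coming from $\Psi_2'$, I obtain $z_j(\alpha)=z_j(\alpha')$, hence $F_{j,\alpha}(x)=F_{j,\alpha'}(x)$, for all $j\in [1,k-1]$.

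For the remaining case $j=k$, the point is that $z_k(\alpha)=0=z_k(\alpha')$ by construction, so the comparison of minimal polynomials over $K_{k,\alpha}=K=K_{k,\alpha'}$ directly gives $F_{k,\alpha}(x)=F_{k,\alpha'}(x)=F_\alpha(x)$. At this point I will have $F_{j,\alpha}=F_{j,\alpha'}$ for every $j\in [1,k]$, and Lemma \ref{boring lemma} completes the argument by forcing $\alpha=\alpha'$.

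I do not expect any real obstacle here: the proof is essentially an unpacking of definitions once the two preparatory lemmas are in hand. The only point requiring mild care is justifying that $z_j(\alpha)=z_j(\alpha')$ — which uses precisely the second coordinate $\Psi_2'$ and is exactly why the constant terms $F_{j,\alpha}(0)$ were included in the definition of $\Psi'$. In other words, the design of $\Psi_2'$ is tailored to rigidify the tower $K_{j,\alpha}$ so that the minimal polynomial data recovered from $\Psi_1'$ can be promoted from a coset in $K_{j,\alpha}$ to an honest equality of polynomials.
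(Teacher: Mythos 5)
Your proof is correct and follows essentially the same path as the paper: use $\Psi_1'$ together with Lemma \ref{dumb lemma} to pin down each $F_{j,\alpha}(x)$ up to an additive constant, use $\Psi_2'$ to fix that constant for $j\in[1,k-1]$, handle $j=k$ via the minimal polynomial of $z_\alpha$ over $K$, and conclude with Lemma \ref{boring lemma}. The only cosmetic difference is that you make the intermediate deduction $z_j(\alpha)=z_j(\alpha')$ explicit by evaluating the constant difference at $x=0$, whereas the paper phrases it as the non-constant coefficients (from $\Psi_1'$) and the constant coefficient (from $\Psi_2'$) of $F_{j,\alpha}$ both being determined.
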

\begin{proof}
    Let $\alpha$ and $\alpha'$ be such that \[\Psi'(\alpha)=\Psi'(\alpha').\]Since $\Psi_1'(\alpha)=\Psi'_1(\alpha')$, it follows that $z_\alpha=z_{\alpha'}$ and $K_{j, \alpha}=K_{j, \alpha'}$ for all values of $j$. Lemma \ref{dumb lemma} asserts that the irreducible polynomial of $z_\alpha$ over $K_{j, \alpha}$ is \[F_{j, \alpha}(x)-F_{j, \alpha}(z_\alpha)=F_{j, \alpha}(x)-z_j(\alpha).\] Therefore, $\Psi_1'(\alpha)$ determines all non-constant coefficients of $F_{j, \alpha}(x)$. The equality $\Psi_1'(\alpha)=\Psi_1'(\alpha')$ implies that $F_{j, \alpha}(x)-F_{j, \alpha'}(x)$ is a constant. On the other hand, the equality $\Psi_2'(\alpha)=\Psi_2'(\alpha')$ implies that for all $j\in [1, k-1]$ \[F_{j, \alpha}(0)=F_{j, \alpha'}(0).\] Therefore, we have shown that for all $j\in [1, k-1]$, \[F_{j, \alpha}(x)=F_{j, \alpha'}(x).\] Since $z_\alpha=z_{\alpha'}$, it follows that the minimal polynomials $F_\alpha(x)$ and $F_{\alpha'}(x)$ are equal. Hence, for all $j\in [1, k]$, \[F_{j, \alpha}(x)=F_{j, \alpha'}(x).\] Lemma \ref{boring lemma} then implies that $\alpha=\alpha'$, and therefore, $\Psi'$ is injective.
\end{proof}
\section{Proof of the main result}\label{s 6}
\par In this short section, we prove the main result of the paper. First, we establish an asymptotic lower bound for $\cP_{N, K}\left(Y; S(\vec{n})\right)$.
\begin{proposition}\label{prop 5.5}
    Let $k\in \Z_{\geq 2}$ and $\vec{n}=(n_1, \dots, n_k)\in \Z_{\geq 2}^k$, we have that 
    \[\cP_{N, K}\left(Y; S(\vec{n})\right)\gg Y^{dB(\vec{n})},\] where 
    \[B(\vec{n}):=A(\vec{n})-\sum_{j=1}^{k-1} N_j=\sum_{j=1}^{k-1} \left(\frac{n_j-1}{2}\right)\left(\prod_{v=1}^j n_v\right)+\left(\frac{n_k+1}{2}\right)\left(\prod_{v=1}^k n_v\right).\]
\end{proposition}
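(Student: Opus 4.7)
The plan is to combine the injectivity of the map $\Psi'$ (Proposition \ref{prop 5.9}), the lower bound on $\#\mathfrak{X}'(\cO_K;Y)$ from Proposition \ref{prop A}, and the upper bound on $\#\mathcal{B}(C_2 Y)$ from \eqref{BY eqn}, and then pass from $\cP'_{N,K}$ to $\cP_{N,K}$ via a forgetful map whose fibers are uniformly bounded.

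\textbf{Step 1: Double counting via $\Psi'$.} Since $\Psi'$ is injective (Proposition \ref{prop 5.9}), we have
\[
\#\mathfrak{X}'(\cO_K;Y)\;\leq\; \#\cP'_{N,K}\bigl(C_3 Y;S(\vec{n})\bigr)\cdot\#\mathcal{B}(C_2 Y).
\]
By Proposition \ref{prop A}, the left-hand side is $\gg Y^{dA(\vec{n})}$, and by \eqref{BY eqn} we have $\#\mathcal{B}(C_2 Y)\ll Y^{d\sum_{i=1}^{k-1}N_i}$. Combining these, we obtain
\[
\#\cP'_{N,K}\bigl(C_3 Y;S(\vec{n})\bigr)\;\gg\;Y^{d\left(A(\vec{n})-\sum_{i=1}^{k-1}N_i\right)}\;=\;Y^{dB(\vec{n})}.
\]

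\textbf{Step 2: Forgetful map to $\cP_{N,K}$.} There is an obvious forgetful map
\[
\pi:\cP'_{N,K}(Y;S(\vec{n}))\longrightarrow \cP_{N,K}(Y;S(\vec{n})),\qquad (z;E_0,\dots,E_k)\mapsto z.
\]
For a fixed $z$ with $\op{Gal}(\widetilde{K(z)}/K)\simeq S(\vec{n})$, the intermediate fields $K\subset E_{k-1}\subset\cdots\subset E_0=K(z)$ with the prescribed Galois conditions are in bijection (via the Galois correspondence applied to $\widetilde{K(z)}/K$) with certain chains of subgroups of $S(\vec{n})$ containing the stabilizer of $z$. The number of such chains is bounded by a constant $M=M(\vec{n})$ depending only on $\vec{n}$ (indeed, only on the finite group $S(\vec{n})$ and its permutation structure). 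Hence the fibers of $\pi$ have cardinality at most $M$, so
\[
\#\cP_{N,K}(Y;S(\vec{n}))\;\geq\;M^{-1}\,\#\cP'_{N,K}(Y;S(\vec{n})).
\]

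\textbf{Step 3: Rescaling.} Applying Step 2 at height $C_3 Y$ and combining with Step 1 gives
\[
\#\cP_{N,K}\bigl(C_3 Y;S(\vec{n})\bigr)\;\gg\;Y^{dB(\vec{n})}.
\]
Substituting $Y\mapsto Y/C_3$ yields $\#\cP_{N,K}(Y;S(\vec{n}))\gg Y^{dB(\vec{n})}$, as desired.

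The main technical point that requires care is the uniform fiber bound in Step 2: one must verify that the number of admissible towers of intermediate fields above $K$ of degree chain $(N_1,N_2,\dots,N_{k-1},N)$ inside $K(z)$ is bounded purely in terms of the abstract permutation group $S(\vec{n})$, independent of $z$ and $Y$. This is a finite-group statement about the lattice of subgroups of $S(\vec{n})$ lying between a point stabilizer and the whole group, so it is immediate once formulated correctly. Everything else in the argument is the routine double-counting inequality already set up by the injectivity of $\Psi'$.
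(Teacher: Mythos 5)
Your proof is correct and follows essentially the same route as the paper's: both combine the injectivity of $\Psi'$ from Proposition \ref{prop 5.9}, the lower bound $\#\mathfrak{X}'(\cO_K;Y)\gg Y^{dA(\vec{n})}$ from Proposition \ref{prop A}, and the estimate $\#\mathcal{B}(Y)\sim Y^{d\sum N_i}$ to bound $\#\cP'_{N,K}$ from below, then pass to $\cP_{N,K}$ via a forgetful map whose fibers are bounded by a constant coming from the finitely many subgroup chains in $S(\vec{n})$. Your treatment is if anything a bit more careful than the paper's, explicitly tracking the constants $C_2, C_3$ in the rescaling step and correctly observing that the relevant subgroup chains start at the point stabilizer rather than at the trivial subgroup.
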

\begin{proof}
By Proposition \ref{prop A},
\[\# \mathfrak{X}'(\cO_K;Y)\gg Y^{d A},\]
    where 
    \[A=A(\vec{n}):=\sum_{j=1}^{k} \left(\frac{n_j+1}{2}\right)\left(\prod_{v=1}^j n_v\right).\]
    Recall that by \eqref{BY eqn}, $\#\mathcal{B}(Y)\sim Y^{d\left(\sum_{i=1}^{k-1} N_i\right)}$.
    It follows from Proposition \ref{prop 5.9} that $\Psi'$ is injective, and hence, 
\[\#\cP_{N, K}'(Y;S(\vec{n}))\gg \frac{\#\mathfrak{X}'(\cO_K; Y)}{\#\mathcal{B}( Y)}\gg Y^{d \left(A(\vec{n})-\sum_{i=1}^{k-1}N_i\right)}=Y^{d B(\vec{n})}.\]
There is a constant $C$ which depends only on $\vec{n}$ such that \[C\#\cP_{N, K}(Y;S(\vec{n}))\geq \# \cP_{N, K}'(Y;S(\vec{n})).\] By the Galois correspondence, $C$ can be taken to be the number of towers $\{1\}=H_0\subset H_1\subset H_2\subset \dots H_{k-1}\subset H_k=S(\vec{n})$ of subgroups $H_i$ of $S(\vec{n})$. Clearly, $C$ depends only on $\vec{n}$. Therefore, we have shown that 
\[\cP_{N, K}\left(Y; S(\vec{n})\right)\gg Y^{dB(\vec{n})},\] where 
    \[B(\vec{n}):=\sum_{j=1}^{k-1} \left(\frac{n_j-1}{2}\right)\left(\prod_{v=1}^j n_v\right)+\left(\frac{n_k+1}{2}\right)\left(\prod_{v=1}^k n_v\right).\]
\end{proof}

\begin{proof}[Proof of Theorem \ref{main thm}]
    It is easy to see that 
    \[B>\left(\frac{n_k+1}{2}\right)N\geq \frac{3}{2}N>N.\]The result is a direct consequence of Proposition \ref{prop 4.3} and Proposition \ref{prop 5.5}.
\end{proof}

\begin{proof}[Proof of Theorem \ref{main corollary}]
    Let $B$ and $N$ be the quantities defined in the statement of Theorem \ref{main thm}. We find that 
    \[ B= \left(\frac{n^{k+1}+2n^k-n}{2}\right)\text{ and } N= n^k.\]
Since $n^{k+1}\leq 2n^{2k}$, we find that $B\leq \frac{N^2}{4}+N$. Therefore, the Theorem \ref{main thm} asserts that 
\[N_{n^k, K}(X;[S_n]^k)\gg X^{\frac{(B-N)(N+2)}{N^3-N^2}}=X^{\frac{n^{2k}+n^k-2}{2\left(n^{3k-1}-n^{2k-1}\right)}}.\]
\end{proof}

\begin{remark}\label{only remark}
    When $K=\Q$, one may also apply the Theorem \ref{ptbw} of Pierce, Turnage-Butterbaugh and Wood and obtain that 
    \[N_{n^k, \Q}(X;[S_n]^k)\gg_{\epsilon} X^{\beta_{n, k}-\epsilon},\] where $\beta_{n, k}:=\frac{1-(n!)^{-k}}{n^{k-1}(2n^k-2)}$. This lower bound is weaker than the Corollary \ref{main corollary} above, and this is why we did not pursue this particular strategy. Our method is significantly more elaborate and gives better bounds, and applies to all number fields $K$.
\end{remark}

\bibliographystyle{alpha}
\bibliography{references}

\begin{thebibliography}{PTBW20}

\bibitem[Bha05]{bhargava2005density}
Manjul Bhargava.
\newblock The density of discriminants of quartic rings and fields.
\newblock {\em Annals of Mathematics}, pages 1031--1063, 2005.

\bibitem[Bha10]{bhargava2010density}
Manjul Bhargava.
\newblock The density of discriminants of quintic rings and fields.
\newblock {\em Annals of mathematics}, pages 1559--1591, 2010.

\bibitem[BJ19]{benedetto2019odoni}
Robert~L Benedetto and Jamie Juul.
\newblock Odoni's conjecture for number fields.
\newblock {\em Bulletin of the London Mathematical Society}, 51(2):237--250,
  2019.

\bibitem[BS08]{bary2008characterization}
Lior Bary-Soroker.
\newblock On the characterization of {H}ilbertian fields.
\newblock {\em International {M}athematics {R}esearch {N}otices},
  2008(9):rnn089--rnn089, 2008.

\bibitem[BSW22]{bhargava2022squarefree}
Manjul Bhargava, Arul Shankar, and Xiaoheng Wang.
\newblock Squarefree values of polynomial discriminants i.
\newblock {\em Inventiones mathematicae}, 228(3):1037--1073, 2022.

\bibitem[BW08]{bhargava2008density}
Manjul Bhargava and Melanie Wood.
\newblock The density of discriminants of {$S_3$}-sextic number fields.
\newblock {\em Proceedings of the American Mathematical Society},
  136(5):1581--1587, 2008.

\bibitem[CDO02]{cohen2002enumerating}
Henri Cohen, Francisco Diaz~Y Diaz, and Michel Olivier.
\newblock Enumerating quartic dihedral extensions of.
\newblock {\em Compositio Mathematica}, 133(1):65--93, 2002.

\bibitem[Coh81]{cohen1981}
S.~D. Cohen.
\newblock The {D}istribution of {G}alois {G}roups and {H}ilbert's
  {I}rreducibility {T}heorem.
\newblock {\em Proceedings of the London Mathematical Society},
  s3-43(2):227--250, 1981.

\bibitem[Cou20]{couveignes2020enumerating}
Jean-Marc Couveignes.
\newblock Enumerating number fields.
\newblock {\em Annals of Mathematics}, 192(2):487--497, 2020.

\bibitem[DH71]{davenport1971density}
Harold Davenport and Hans~Arnold Heilbronn.
\newblock On the density of discriminants of cubic fields. ii.
\newblock {\em Proceedings of the Royal Society of London. A. Mathematical and
  Physical Sciences}, 322(1551):405--420, 1971.

\bibitem[DK22]{odonifalse}
Philip Dittmann and Borys Kadets.
\newblock Odoni's conjecture on arboreal {G}alois representations is false.
\newblock {\em Proc. Amer. Math. Soc.}, 150(8):3335--3343, 2022.

\bibitem[EV06]{ellenberg2006number}
Jordan~S Ellenberg and Akshay Venkatesh.
\newblock The number of extensions of a number field with fixed degree and
  bounded discriminant.
\newblock {\em Annals of mathematics}, pages 723--741, 2006.

\bibitem[Jon13]{jones2013galois}
Rafe Jones.
\newblock {G}alois representations from pre-image trees: an arboreal survey.
\newblock {\em Publications mathematiques de Besancon. Algebre et theorie des
  nombres}, pages 107--136, 2013.

\bibitem[Kad20]{kadets2020large}
Borys Kadets.
\newblock Large arboreal {G}alois representations.
\newblock {\em Journal of Number Theory}, 210:416--430, 2020.

\bibitem[Kl{\"u}05]{kluners2005counter}
J{\"u}rgen Kl{\"u}ners.
\newblock A counter example to malle's conjecture on the asymptotics of
  discriminants.
\newblock {\em Comptes Rendus Mathematique}, 340(6):411--414, 2005.

\bibitem[KM04]{klunersmalle}
J\"{u}rgen Kl\"{u}ners and Gunter Malle.
\newblock Counting nilpotent {G}alois extensions.
\newblock {\em J. Reine Angew. Math.}, 572:1--26, 2004.

\bibitem[KP23]{koymans2023malle}
Peter Koymans and Carlo Pagano.
\newblock On malle’s conjecture for nilpotent groups.
\newblock {\em Transactions of the American Mathematical Society, Series B},
  10(11):310--354, 2023.

\bibitem[LLOT21]{landesman2021improved}
Aaron Landesman, Robert~J Lemke~Oliver, and Frank Thorne.
\newblock Improved lower bounds for the number of fields with alternating
  {G}alois group.
\newblock {\em Bulletin of the London Mathematical Society}, 53(4):1159--1173,
  2021.

\bibitem[LOT22]{lemke2022upper}
Robert~J Lemke~Oliver and Frank Thorne.
\newblock Upper bounds on number fields of given degree and bounded
  discriminant.
\newblock {\em Duke Mathematical Journal}, 171(15):3077--3087, 2022.

\bibitem[M{\"a}k85]{maki1985density}
Sirpa M{\"a}ki.
\newblock {\em On the density of abelian number fields}, volume~54.
\newblock Suomalainen tiedeakatemia, 1985.

\bibitem[Mal02]{malle2002distribution}
Gunter Malle.
\newblock On the distribution of {G}alois groups.
\newblock {\em Journal of Number Theory}, 92(2):315--329, 2002.

\bibitem[Mal04]{malle2004distribution}
Gunter Malle.
\newblock On the distribution of {G}alois groups, {II}.
\newblock {\em Experimental Mathematics}, 13(2):129--135, 2004.

\bibitem[Odo85a]{odoni1985galois}
Robert~WK Odoni.
\newblock The {G}alois theory of iterates and composites of polynomials.
\newblock {\em Proceedings of the London Mathematical Society}, 3(3):385--414,
  1985.

\bibitem[Odo85b]{odoni1985prime}
RWK Odoni.
\newblock On the prime divisors of the sequence $w_{n+1}=1+w_1\dots w_n$.
\newblock {\em Journal of the {L}ondon {M}athematical {S}ociety}, 2(1):1--11,
  1985.

\bibitem[PTBW20]{pierce2020effective}
Lillian~B Pierce, Caroline~L Turnage-Butterbaugh, and Melanie~Matchett Wood.
\newblock An effective chebotarev density theorem for families of number
  fields, with an application to $\ell$-torsion in class groups.
\newblock {\em Inventiones mathematicae}, 219(2):701--778, 2020.

\bibitem[Sch95]{schmidt1995number}
Wolfgang~M Schmidt.
\newblock Number fields of given degree and bounded discriminant.
\newblock {\em Ast{\'e}risque}, 228(4):189--195, 1995.

\bibitem[Spe18]{specter2018polynomials}
Joel Specter.
\newblock Polynomials with surjective arboreal {G}alois representations exist
  in every degree.
\newblock {\em arXiv preprint arXiv:1803.00434}, 2018.

\bibitem[Wan21]{wang2021malle}
Jiuya Wang.
\newblock Malle's conjecture for.
\newblock {\em Compositio Mathematica}, 157(1):83--121, 2021.

\bibitem[Wri89]{wright1989distribution}
David~J Wright.
\newblock Distribution of discriminants of abelian extensions.
\newblock {\em Proceedings of the London Mathematical Society}, 3(1):17--50,
  1989.

\end{thebibliography}
\end{document}